\newcommand{\C}{\mathbb{C}}
\newcommand{\N}{\mathbb{N}}
\newcommand{\ddc}{\text{\normalfont dd}^c}
\newcommand{\diff}{\text{\normalfont d}}
\newcommand{\supp}{\text{\normalfont supp }}
\newcommand{\capK}{\rm cap}
\newcommand{\Curlywedge}{\mathlarger \curlywedge}
\newcommand{\codim}{\text{codim }}
\newtheorem{theorem}{Theorem}[section]
\newtheorem{proposition}[theorem]{Proposition}
\newtheorem{corollary}[theorem]{Corollary}
\newtheorem{lemma}[theorem]{Lemma}
\newtheorem*{theorem*}{Theorem}
\theoremstyle{definition}
\newtheorem*{definition*}{Definition}
\newtheorem{remark}[theorem]{Remark}
\numberwithin{equation}{section}
\title{Density and intersection of $(1,1)$-currents}
\author{Lucas Kaufmann, Duc-Viet Vu}
\address{Chalmers University of Technology - SE-412 96 Gothenburg, Sweden.}
\curraddr{Department of Mathematics, National University of Singapore, 10, Lower Kent Ridge Road, Singapore 119076}
\email{lucassa@chalmers.se; lucaskaufmann@nus.edu.sg}
\address{School of Mathematics, Korea institute for advanced study,
 85 Hoegiro, Dongdaemun-gu, Seoul 02455, Republic of Korea}
\email{vuviet@kias.re.kr}
\date{}
\begin{document}

\begin{abstract}
We study density currents associated with a collection of positive closed $(1,1)$-currents. We prove that the density current is unique and determined by the usual wedge product in some classical situations including the case where the currents have  bounded potentials. As an application, we compare density currents with the non-pluripolar product and the Andersson-Wulcan product. We also analyse some situations where the wedge product is not well-defined but the density can be explicitly computed.
\end{abstract}

\maketitle

\section{Introduction and main results} \label{sec1}

Let $X$ be a complex manifold of dimension $n$ and let $T_1,\ldots,T_m$ be positive closed currents on $X$. It is a central problem in complex analysis and its applications to give a meaning to the wedge product $T_1 \wedge \ldots \wedge T_m$. When the $T_j$ have bidegree $(1,1)$ the existence of local plurisubharmonic (psh) potentials makes the problem more tractable and in that case a good definition of $T_1 \wedge \ldots \wedge T_m$ can be given in many situations. See for instance \cite{Bedford_Taylor_76,demailly:lelong,Fornaess_Sibony}.  

When the $T_j$ have higher bidegree, a recent approach to this question was carried out by Dinh and Sibony in \cite{dinh-sibony:density}. They define the notion of density current associated with $T_1,\ldots,T_m$ that we briefly recall. 

 
 Consider the cartesian product $X^m$ and $$\mathbf T = T_1 \otimes \cdots \otimes T_m$$ as a positive closed current on $X^m.$
Denote by $\Delta \subset X^m$ the diagonal and by $N\Delta$ its normal bundle inside $X^m.$ Notice that $\Delta$ is naturally isomorphic to $X$.

An \textit{admissible map} $\tau: U \to W$ is a diffeomorphism from a neighbourhood $U$ of $\Delta$ inside $X^m$ to a neighbourhood $W$ of $\Delta$ seen as the zero section of $N\Delta$  such that $\tau$ restricts to the identity on $\Delta$ and its differential at $\Delta$ is the identity in the normal direction. These maps always exist but are not holomorphic in general. 

For  $\lambda \in \C^*$ let $$A_\lambda: N\Delta \to N\Delta$$ be given by fiberwise multiplication by  $\lambda$. A \textbf{density current} $R$ associated with $(T_1,\ldots,T_m)$ is a positive closed current on $N \Delta$ such that there exists a sequence of complex numbers  $\{\lambda_k\}_{k \in \N}$ converging to $\infty$ for which
$$R = \lim_{k \to \infty} (A_{\lambda_k})_* \tau_* \mathbf T,$$ for every admissible map $\tau.$ 
We say that the \textbf{Dinh-Sibony product} (or density product) of $T_1,\ldots,T_m$ exists if there is only one density current $R$ and $R = \pi^* S$ for some positive closed current on $\Delta$, where $\pi: N \Delta \to \Delta$ is the canonical projection. In that case we denote $$S = T_1 \Curlywedge \cdots \Curlywedge T_m.$$  
 
It was shown in \cite{dinh-sibony:density} that if $X$ is K\"ahler and the supports of $T_1, \ldots, T_m$ have a compact intersection, then density currents exist. 
Also, the cohomology class of the trivial extension of a density current $R$ to the projectivization $\overline{N \Delta}$ of $N \Delta$ is independent of  the sequence $\{\lambda_k\}_{k \in \N}$.  

Suppose now that  each $T_1,\ldots,T_{m-1}$ is of bidegree $(1,1)$, that $T:=T_m$ is of bidegree $(p,p)$ and  $m-1+p \leq n$. For $j=1,\ldots,m-1$ we can write locally $$T_j = \ddc u_j$$
for some psh functions $u_j$ which are unique up to the addition of a pluriharmonic function.

\begin{definition*}
We say that $(T_1,\ldots,T_{m-1},T)$ satisfy \textbf{Property $(\star)$} if the following holds

$(i)$ $u_{m-1}$ is locally integrable with respect to $T$ and for every $1 \le k \le m-2,$ the function $u_k$ is locally integrable with respect to $\ddc u_{k+1} \wedge \cdots \wedge \ddc u_{m-1} \wedge T$.

$(ii)$ for any open subset $U$ of $X$ and any sequence of smooth p.s.h.\ functions $u_{k}^j$ decreasing to $u_k$ on $U$ with $1 \le k \le m-2$ we have $$u^j_k\ddc u^j_{k+1} \wedge \cdots \wedge \ddc u^j_{m-1} \wedge T \longrightarrow u_k \ddc u_{k+1} \wedge \cdots \wedge \ddc u_{m-1} \wedge T \quad \text{on } U.$$
\end{definition*}

The last definition doesn't depend on the choice of the potentials $u_j$ of $T_j.$   Condition $(i)$ allows us to define $\ddc u_k \wedge \cdots \wedge \ddc u_{m-1} \wedge T$ recursively as
\begin{equation} \label{eq:BT-def}
\ddc(u_k \ddc u_{k+1} \wedge \cdots \wedge \ddc u_{m-1} \wedge T)
\end{equation}
and produces a  closed current. Hence we can define $T_k \wedge \cdots \wedge T_{m-1} \wedge T$ for every $k$, which is a closed current. Condition $(ii)$ implies that this current is positive. We shall say in this case that  $T_k \wedge \cdots \wedge T_{m-1} \wedge T$ is \textit{classically defined}.

Our first main result is the following.

\begin{theorem} \label{thm:compatible-def-wedge-general}
Let  $T_1, \ldots, T_{m-1},T$ be as above. Assume they satisfy Property $(\star).$
Then the Dinh-Sibony product of $T_1, \ldots, T_{m-1},T$ is well defined and
\begin{equation} \label{eq:compatible-def-wedge-general}
T_1 \wedge \cdots \wedge T_{m-1} \wedge T = T_1 \Curlywedge \cdots \Curlywedge T_{m-1} \Curlywedge T.
\end{equation}
\end{theorem}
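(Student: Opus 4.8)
The strategy is to work by induction on $m$ and to reduce everything to a local statement near the diagonal, where the density current is computed via an admissible map. First I would recall the standard reduction from \cite{dinh-sibony:density}: since both sides of \eqref{eq:compatible-def-wedge-general} are determined locally and the density current is independent of the chosen admissible map $\tau$, it suffices to fix one convenient $\tau$ and compute $\lim_k (A_{\lambda_k})_* \tau_* \mathbf{T}$ for an arbitrary sequence $\lambda_k \to \infty$, showing that the limit is unique and equals $\pi^*(T_1 \wedge \cdots \wedge T_{m-1} \wedge T)$. Locally on a chart $X \cong \Omega \subset \C^n$ I would take $\tau$ to be the affine map $(z_1,\ldots,z_m) \mapsto (z_1, z_2 - z_1, \ldots, z_m - z_1)$ identifying a neighbourhood of $\Delta$ with a neighbourhood of $\Omega \times \{0\} \subset \Omega \times \C^{n(m-1)}$, so that $A_\lambda$ acts by $(z,w_1,\ldots,w_{m-1}) \mapsto (z, \lambda w_1, \ldots, \lambda w_{m-1})$; in these coordinates $\mathbf{T}$ is the product current and its push-forward is explicit.

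The heart of the argument is the case $m=2$, i.e.\ one $(1,1)$-current $T_1 = \ddc u_1$ and $T = T_2$ of bidegree $(p,p)$ with $1 + p \le n$, satisfying Property $(\star)$. Here I would use the fact that $u_1$ is locally integrable with respect to $T$ and that, for smooth psh $u_1^j \downarrow u_1$, one has $u_1^j T \to u_1 T$ and hence $\ddc u_1^j \wedge T \to \ddc u_1 \wedge T$. The key computation: after applying $\tau_*$ and $(A_\lambda)_*$, the current $(A_\lambda)_* \tau_* (\ddc u_1 \otimes T)$ is, along the fibres, governed by $\ddc_w\big(u_1(z + \lambda^{-1}w)\big)$ wedged with the pull-back of $T$; letting $\lambda \to \infty$, the function $u_1(z + \lambda^{-1} w)$ converges (in $L^1_{loc}$, after the harmless subtraction of a term depending only on $z$, which is pluriharmonic in $w$ and contributes nothing under $\ddc_w$) to $u_1(z)$, a function constant in $w$. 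Passing to the limit in the product and using the continuity assertion of Property $(\star)$ to justify the convergence $u_1(\cdot)\, T \to$ limit, I would obtain that the density current is $\pi^*(\ddc u_1 \wedge T) = \pi^*(T_1 \wedge T)$, independent of $\lambda_k$. The main obstacle is precisely making this limit rigorous: one must control the mass of $(A_\lambda)_* \tau_* \mathbf{T}$ uniformly (this follows from the cohomological mass bound recalled in the excerpt), extract a limit, and then identify it — the identification is where Property $(\star)(ii)$ is indispensable, since without a continuity statement for the Bedford--Taylor-type product under decreasing smooth approximations one cannot exchange the limit in $\lambda$ with the wedge operation.

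For the inductive step I would assume the theorem for $m-1$ currents and write $T_1 \wedge \cdots \wedge T_{m-1} \wedge T$ via \eqref{eq:BT-def} as $\ddc\big(u_1\, \ddc u_2 \wedge \cdots \wedge \ddc u_{m-1} \wedge T\big)$. Set $S' := T_2 \wedge \cdots \wedge T_{m-1} \wedge T$, which by Property $(\star)$ (restricted to the last $m-2$ currents, which inherit it) is classically defined, and by the induction hypothesis equals the Dinh--Sibony product $T_2 \Curlywedge \cdots \Curlywedge T_{m-1} \Curlywedge T$. One then wants to say that the density current of the full tuple is obtained from that of $(T_2,\ldots,T_{m-1},T)$ by one further ``slicing'' against $T_1$ in the remaining normal direction. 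Concretely, I would factor the rescaling $A_{\lambda_k}$ on $N\Delta$ for $X^m$ through the rescaling on $N\Delta$ for $X^{m-1}$ together with the extra $\C^n$-factor coming from $T_1$, and apply the $m=2$ analysis to $T_1$ against $S'$ (noting $S'$ need not have bounded potentials, which is exactly why the general Property $(\star)$, rather than just boundedness, is the right hypothesis). The technical subtlety I expect here is that $A_{\lambda_k}$ does not split as a product of the lower-dimensional rescaling and an independent rescaling of the $T_1$-factor unless the admissible map is chosen compatibly; I would handle this by choosing $\tau$ of the product form adapted to the fibration $N\Delta_{X^m} \to N\Delta_{X^{m-1}}$ and invoking the independence of the density current from $\tau$ to transport the conclusion back. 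Once the limit is identified fibrewise as $\pi^*\ddc_w\big(u_1(z)\, S'\big)$-type expression, an application of $\ddc$ in the $z$-variable and Property $(\star)(ii)$ finish the identification with $T_1 \wedge \cdots \wedge T_{m-1} \wedge T$.
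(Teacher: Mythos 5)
Your overall skeleton (localize, fix the standard map $\varrho$, prove convergence of $(A_\lambda)_*\varrho_*\mathbf T$ to $\pi^*(T_1\wedge\cdots\wedge T_{m-1}\wedge T)$, then transfer to an arbitrary admissible map) agrees with the paper, and your $m=2$ discussion is in the right spirit. The genuine gap is in your inductive step. You propose to form $S':=T_2\wedge\cdots\wedge T_{m-1}\wedge T$, use the induction hypothesis to identify it with the density of the last $m-1$ factors, and then ``apply the $m=2$ analysis to $T_1$ against $S'$'' after factoring the rescaling. This is an iterated-limit argument: in the coordinates $\varrho$ the rescaled current is $u_1(\lambda^{-1}y^1+y^m)$ multiplied against $R_\lambda:=\ddc u_2(\lambda^{-1}y^2+y^m)\wedge\cdots\wedge\ddc u_{m-1}(\lambda^{-1}y^{m-1}+y^m)\wedge\pi^*T$, and knowing separately that $R_\lambda\to\pi^*S'$ weakly and that $u_1(\lambda^{-1}y^1+y^m)\to u_1(y^m)$ does \emph{not} allow you to conclude that the product converges to $\pi^*(u_1 S')$: products of currents are not continuous under weak convergence of the factors, and this discontinuity is exactly the difficulty the theorem is about. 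Moreover Property $(\star)(ii)$, which you correctly identify as indispensable, is a statement about \emph{simultaneous} decreasing smooth approximations of all the potentials $u_1,\dots,u_{m-1}$; your two-step scheme (first the last $m-1$ factors, then $T_1$ against the fixed current $S'$) never produces such a simultaneous approximation, and the hypothesis does not apply to it. Choosing the admissible map ``adapted to the fibration'' does not help, because $A_\lambda$ rescales all normal directions at once no matter which admissible map is used; the obstruction is the joint limit, not the coordinates. The paper avoids iterated limits altogether: in Lemma \ref{le_claim1} it pairs the rescaled current against product test forms with a radial bump of full degree in $y'$, so that by Fubini the pairing equals that of $u_1^\lambda\,\ddc u_2^\lambda\wedge\cdots\wedge\ddc u_{m-1}^\lambda\wedge T$, where each $u_j^\lambda$ is a radial convolution at scale $|\lambda|^{-1}$, hence smooth, psh, and decreasing in $|\lambda|$ \emph{simultaneously in all factors}; Property $(\star)(ii)$ then applies verbatim. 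The induction is used only through Lemma \ref{le_hoithulambda2} and a bidegree argument to show that the components of $\ddc\widetilde u_1\wedge\cdots\wedge\widetilde T$ carrying derivatives in the fiber variables (the difference with the current $R_m$ of (\ref{eq:Rm})) contribute nothing in the limit — an issue your plan does not address at all.

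A secondary but real problem: you claim the uniform mass bound for $(A_\lambda)_*\tau_*\mathbf T$ ``follows from the cohomological mass bound''. That bound requires $X$ K\"ahler and compactness of the intersection of the supports, neither of which is assumed in Theorem \ref{thm:compatible-def-wedge-general} (the paper stresses this). In the paper the local mass bounds are extracted from the already-proved convergence (\ref{lim_lamRmpho}) against the spanning family of product test forms, and the independence of the admissible map is not quoted from \cite{dinh-sibony:density} but proved in this generality as Proposition \ref{pro_taunga}. To repair your argument you would need to replace the iterated limit by a one-shot identification of the limit in which all potentials are regularized simultaneously (e.g.\ the Fubini/convolution device above), and to obtain mass bounds and $\tau$-independence by local means rather than cohomology.
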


The question of determining whether a given collection $(T_1,\ldots,T_{m-1},T)$ satisfies Property $(\star)$ has been intensively studied. In particular, this is the case when $u_1, \ldots, u_{m-1}$ are locally bounded.  When $T$ is the constant function equal to one then $(i)$ and $(ii)$ are known to hold for a large class of psh functions. An important particular case is when the intersection of the unbounded loci of the $u_j$ has small Hausdorff dimension (see \cite[Prop. 3.6]{demailly:agbook} or \cite[Cor. 3.6]{Fornaess_Sibony}). We refer to \cite{Chern_Levine_Nirenberg,Bedford_Taylor_76,Fornaess_Sibony,demailly:agbook} for more detailed information. 

It is worth noting that in the setting of Theorem \ref{thm:compatible-def-wedge-general},  we don't require neither the compactness of the intersection of the supports nor the condition that $X$ is K\"ahler. In that case, the existence of a density current (and its uniqueness) will follow from our proof. 

In some situations it is not possible to define an intersection product using (\ref{eq:BT-def}). Nevertheless we can still define what is called the non-pluripolar product as introduced by Bedford-Taylor and Boucksom-Eyssidieux-Guedj-Zeriahi in  \cite{BT_fine_87,BEGZ} (see Section \ref{sec_nonpluri} for the definition). When the currents have analytic singularities this product is related to the density in the following way.

\begin{theorem} \label{the_pluridden}
Let $T_1, \ldots, T_m$ be positive closed $(1,1)-$currents with analytic singularities whose supports have compact intersection. Denote by $Z_j$ the singular locus of $T_j$ and let $Z = \bigcup_{j=1}^m Z_j$. Then  the non-pluripolar product $ \langle T_1 \wedge \cdots \wedge T_m \rangle$ is well-defined and every density current $S$ associated with $(T_1, \ldots, T_m)$ satisfies
\begin{equation*}
\mathbf{1}_{\pi^{-1}(X \setminus Z)} S = \pi^* \langle T_1 \wedge \cdots \wedge T_m \rangle.
\end{equation*}
In particular $\pi^*  \langle T_1 \wedge \cdots \wedge T_m \rangle \leq S$ and if the Dinh-Sibony product of $T_1 , \ldots, T_{m} $ is well defined we have $ \langle T_1 \wedge \cdots \wedge T_m \rangle \leq T_1 \Curlywedge \cdots \Curlywedge T_m $.
\end{theorem}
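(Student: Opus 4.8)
The plan is to work locally near a point of $X$ and to use the structure of analytic singularities to reduce the statement, away from the singular set $Z$, to a situation where Property $(\star)$ applies. First I would recall that a positive closed $(1,1)$-current $T_j$ with analytic singularities can be written locally as $T_j = \ddc u_j$ with $u_j = c_j \log\big(\sum_\ell |g_{j,\ell}|^2\big) + v_j$, where $c_j > 0$, the $g_{j,\ell}$ are holomorphic, $v_j$ is smooth (or bounded psh), and the common zero set of the $g_{j,\ell}$ is $Z_j$. On $X \setminus Z$ each $u_j$ is locally bounded, so by the Bedford--Taylor theory the product $T_1 \wedge \cdots \wedge T_m$ is classically defined there and satisfies Property $(\star)$; hence Theorem \ref{thm:compatible-def-wedge-general} gives that on $X\setminus Z$ the Dinh--Sibony product exists and equals the classical Bedford--Taylor product, which in turn coincides with the restriction of the non-pluripolar product $\langle T_1 \wedge \cdots \wedge T_m\rangle$ to $X \setminus Z$ (this last identification is the defining property of the non-pluripolar product, recalled in Section \ref{sec_nonpluri}, together with the fact that $Z$ is pluripolar). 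In particular the non-pluripolar product $\langle T_1 \wedge \cdots \wedge T_m\rangle$ is well defined globally: its mass does not blow up because, by the compactness of $\bigcap_j \supp T_j$, everything of interest happens in a fixed compact set, and the $T_j$ having analytic singularities means the non-pluripolar mass is controlled (one can invoke the standard criterion that analytic singularities are mild enough for the non-pluripolar product to have locally finite mass).

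Next I would pass from $X\setminus Z$ to the statement on $N\Delta$. Let $\pi : N\Delta \to \Delta \cong X$ be the projection and let $S$ be any density current associated with $(T_1,\dots,T_m)$; such currents exist by the Dinh--Sibony theorem quoted in the introduction, since $X$ is not assumed Kähler here — wait, it is not assumed Kähler, so I would instead note that the existence statement is local in nature near $\bigcap_j \supp T_j$ and the compact intersection hypothesis lets us reduce to a Kähler (indeed Stein, or projective-space) neighbourhood, or simply invoke that analytic singularities allow an explicit resolution. Over $X \setminus Z$ the currents $T_j$ have locally bounded potentials, and the key point proved in \cite{dinh-sibony:density} (or reprovable by the argument behind Theorem \ref{thm:compatible-def-wedge-general}) is that in the locally-bounded case the density current is unique and pulled back from the base: $\mathbf{1}_{\pi^{-1}(X\setminus Z)} S = \pi^*\big(T_1\wedge\cdots\wedge T_m\big)|_{X\setminus Z} = \pi^* \langle T_1 \wedge \cdots \wedge T_m\rangle|_{\pi^{-1}(X\setminus Z)}$, where in the last step I use that $\langle T_1\wedge\cdots\wedge T_m\rangle$ puts no mass on $Z$ (being non-pluripolar and $Z$ pluripolar) so it equals its own restriction to $X\setminus Z$ as a current on all of $X$. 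This is exactly the displayed identity $\mathbf{1}_{\pi^{-1}(X\setminus Z)} S = \pi^*\langle T_1\wedge\cdots\wedge T_m\rangle$.

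For the inequality $\pi^*\langle T_1\wedge\cdots\wedge T_m\rangle \le S$: since $S$ is a positive current we have $S = \mathbf{1}_{\pi^{-1}(X\setminus Z)} S + \mathbf{1}_{\pi^{-1}(Z)} S$ with both summands positive, and the first summand is $\pi^*\langle T_1\wedge\cdots\wedge T_m\rangle$ by the identity just established, so the inequality follows at once. Finally, if the Dinh--Sibony product $T_1 \Curlywedge \cdots \Curlywedge T_m$ is well defined, then by definition the unique density current is $S = \pi^*(T_1 \Curlywedge \cdots \Curlywedge T_m)$; combining with $\pi^*\langle T_1\wedge\cdots\wedge T_m\rangle \le S = \pi^*(T_1 \Curlywedge \cdots \Curlywedge T_m)$ and using that $\pi$ is a submersion (so $\pi^*$ is injective on currents and preserves the order relation both ways on pullbacks) gives $\langle T_1\wedge\cdots\wedge T_m\rangle \le T_1 \Curlywedge \cdots \Curlywedge T_m$.

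The main obstacle I expect is the first passage, namely rigorously controlling the behaviour of the density current near $Z$: one must show that the contribution $\mathbf{1}_{\pi^{-1}(Z)} S$ is a well-defined positive closed current (so that the decomposition above makes sense) and that removing it leaves exactly the pullback of the non-pluripolar product. This requires knowing that $\pi^{-1}(Z)$ is "small" for $S$ in the appropriate sense — concretely, that $S$ does not charge $\pi^{-1}(Z)$ in a way that would obstruct the restriction being closed — which should follow from the fact that $Z$ is a proper analytic subset together with the structure theory of positive closed currents (Skoda--El Mir type extension), and from the analytic-singularities hypothesis ensuring the potentials, while unbounded, are not too singular. Handling the case where $X$ is not Kähler, by localising near the compact set $\bigcap_j\supp T_j$, is a secondary technical point but is routine given the compact-intersection hypothesis.
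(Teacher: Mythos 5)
Your route is genuinely different from the paper's, and it is essentially the alternative the paper alludes to right after the statement (``the above result can be proved using Theorem \ref{thm:compatible-def-wedge-general}''). The paper instead proves the more general Theorem \ref{th_density_nonpluri}, valid for arbitrary positive closed $(1,1)$-currents with compact support intersection: it truncates the potentials ($u_j^k=\max\{u_j,-k\}$), applies Theorem \ref{thm:compatible-def-wedge-general} to the truncated currents, and compares $(A_\lambda)_*\tau_*\mathbf T$ with $(A_\lambda)_*\tau_* T(k)$ via quasi-continuity of psh functions and capacity estimates (Lemma \ref{lemma:3-ineq}); the analytic-singularities case is then Corollary \ref{cor:non-pp-analytic-sing}, using that the sets $\mathcal O_k$ are open. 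Your argument --- restrict to the open set $X\setminus Z$ where all potentials are locally bounded, apply Theorem \ref{thm:compatible-def-wedge-general} there, and use that density limits localize over open subsets of the base (for $\Phi$ supported in $\pi^{-1}(X\setminus Z)$ the support of $\tau^*A_\lambda^*\Phi$ is eventually contained in $(X\setminus Z)^m$, so the pairing only sees $\mathbf T|_{(X\setminus Z)^m}$) --- is correct and more direct, and it does yield $\mathbf 1_{\pi^{-1}(X\setminus Z)}S=\pi^*(T_1\wedge\cdots\wedge T_m)|_{X\setminus Z}$ for \emph{every} density current $S$; the deduction of the two inequalities at the end is also fine. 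What the paper's longer detour buys is the statement for arbitrary currents (Theorem \ref{th_density_nonpluri}), including the well-definedness of the non-pluripolar product without any singularity hypothesis. Your concern at the end about $\mathbf 1_{\pi^{-1}(Z)}S$ being closed is unnecessary: only positivity (order zero) of $S$ is used to split off the restriction.

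The one step you assert rather than prove is the well-definedness of $\langle T_1\wedge\cdots\wedge T_m\rangle$, i.e.\ the uniform local mass bound (\ref{eq:non-pp-def-equiv}) near $Z$, which is part of the conclusion and is also what you need to identify the non-pluripolar product with the trivial extension of the Bedford--Taylor product from $X\setminus Z$. Your stated reason --- compactness of $\bigcap_j\supp T_j$ plus a ``standard criterion'' --- does not do the job as written: finiteness of the mass of $\mathbf 1_{\mathcal O_k}\bigwedge_j\ddc\max\{u_j,-k\}$ near a point of $Z$ is a purely local issue, and the compact-intersection hypothesis says nothing about it. The fact is true for analytic singularities, but it needs an argument, e.g.\ a log resolution $h$ with $h^*T_j=[D_j]+\theta_j$, $\theta_j$ with bounded potentials, so that on $\mathcal O_k$ the truncated product coincides with the Bedford--Taylor product on $X\setminus Z$ and its mass is controlled by pushing forward $\theta_1\wedge\cdots\wedge\theta_m$ (this is where Andersson--Wulcan-type finiteness enters). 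The paper sidesteps this entirely: in Theorem \ref{th_density_nonpluri} the bound (\ref{eq:non-pp-def-equiv}) is extracted from Lemma \ref{lemma:3-ineq} together with the Dinh--Sibony mass estimate (\ref{ine_mass_density}), and that is precisely where the compact-intersection hypothesis is actually used. So either supply the resolution argument or import the paper's mass estimate; as it stands this step is a genuine, though reparable, gap.
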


The above result can be proved using Theorem \ref{thm:compatible-def-wedge-general} but we also have a more general statement for arbitrary currents, given by Theorem \ref{th_density_nonpluri}  below.  

Given a positive closed $(1,1)$-current $T$ on $X$ with analytic singularities, Andersson-Wulcan gave a meaning to the $m$-fold self-product of $T$ for every $m=1,\ldots,n$, denoted by $T^{m}_{AW}$ (see   \cite{andersson-wulcan} and Section \ref{sec:AW}). We have the following comparison result.

\begin{theorem} \label{th_AWdensity}  Let $T$ be a positive closed $(1,1)$-current on $X$ with analytic singularities. If $S$ is a density current associated with the $m-$tuple $(T,\ldots,T)$ then
\begin{equation} \label{eq:AW-ineq}
\pi^*T^{m}_{AW} \le S.
\end{equation}
\end{theorem}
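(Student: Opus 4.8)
The plan is to reduce the comparison of $T^m_{AW}$ with a density current to a local statement on $X\setminus Z$, where $Z$ is the singular locus of $T$, and then control the mass concentration along $Z$. Recall that the Andersson--Wulcan product $T^m_{AW}$, for $T$ with analytic singularities, coincides with the non-pluripolar product $\langle T^m\rangle$ plus a residual part supported on $Z$; in particular, $\mathbf 1_{X\setminus Z} T^m_{AW} = \langle T^m\rangle$ (this is one of the structural results of \cite{andersson-wulcan}). On the other hand, Theorem \ref{the_pluridden} (applied to the $m$-tuple $(T,\ldots,T)$, whose supports trivially have compact intersection after passing to a relatively compact chart, or globally if $\supp T$ is compact; otherwise one localizes) gives $\mathbf 1_{\pi^{-1}(X\setminus Z)} S = \pi^*\langle T^m\rangle$ for every density current $S$. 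Hence on $\pi^{-1}(X\setminus Z)$ we already have equality $\pi^*T^m_{AW} = \pi^*\langle T^m\rangle = \mathbf 1_{\pi^{-1}(X\setminus Z)} S \le S$.

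The remaining task is to show $\mathbf 1_{\pi^{-1}(Z)}\,\pi^* T^m_{AW} \le \mathbf 1_{\pi^{-1}(Z)}\, S$, i.e.\ that the part of $T^m_{AW}$ carried by $Z$ does not exceed the corresponding part of the density current. First I would record that $S$ is a positive closed current on $N\Delta$, so $\mathbf 1_{\pi^{-1}(Z)} S$ is itself positive (closedness of $Z$ and positivity of $S$ make the restriction to the closed set $\pi^{-1}(Z)$ a positive current, by the Bedford--Taylor type support argument for restrictions to closed pluripolar/analytic sets). So both sides of \eqref{eq:AW-ineq} split along the decomposition $N\Delta = \pi^{-1}(X\setminus Z)\sqcup \pi^{-1}(Z)$, and on the open part we have equality. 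Thus \eqref{eq:AW-ineq} will follow once we know $\pi^* T^m_{AW}$ restricted to $\pi^{-1}(Z)$ is dominated by $S$ restricted to $\pi^{-1}(Z)$.

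To get this, the approach is to realize $T^m_{AW}$ explicitly via a resolution. By definition (\cite{andersson-wulcan}), after a sequence of blow-ups $\sigma\colon \widetilde X \to X$ with centers over $Z$ one can write $\sigma^* T$ (or rather its local potential $u = \log|F|$ for the defining ideal, plus a smooth term) in a monomial-times-smooth form, and $T^m_{AW}$ is $\sigma_*$ of the corresponding monomial self-intersection, which splits into a ``free'' Monge--Amp\`ere part plus exceptional-divisor contributions $\sum_k \mu_k [E_k']$ with $\mu_k\ge 0$. The key point is then to produce, inside any density current $S$, at least the mass $\sum_k \mu_k \pi^*[E_k'\cap Z]$ along $\pi^{-1}(Z)$. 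I would do this by testing $S$ against the currents obtained by dilating $\sigma^*\mathbf T$ on $\widetilde X^m$ near the strict transform of the diagonal: the dilations $(A_{\lambda_k})_*\tau_*$ that define $S$ commute, up to the admissible-map freedom, with the ones computing the density of $\sigma^*\mathbf T$, and the (positive) exceptional contributions survive the limit and push down to exactly $\sum_k\mu_k[E_k']$ pulled back by $\pi$. Positivity of all intermediate currents ensures these contributions only add to $S$, never cancel, giving the inequality rather than equality.

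The main obstacle I expect is the last step: making rigorous the claim that every density current $S$ ``sees'' the full exceptional mass of the Andersson--Wulcan product along $Z$. Unlike the smooth locus, where Bedford--Taylor convergence pins down $S$ uniquely, along $Z$ there may be genuine non-uniqueness of density currents, and one must show that all of them lie above $\pi^*T^m_{AW}$. This requires a careful tangent-cone / slicing analysis: choosing coordinates adapted to a stratum of $Z$, writing the dilation $A_\lambda$ in block form along directions tangent and transverse to that stratum, and checking that the weak limit of $(A_{\lambda_k})_*\tau_* \mathbf T$ restricted to $\pi^{-1}$ of the stratum dominates the pullback of the residual current computed on the resolution. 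The delicate bookkeeping is to match the dilation scaling on $X^m$ with the order-of-vanishing weights $\mu_k$ coming from $\sigma$, and to ensure admissible maps adapted to $\sigma$ exist; I would handle this by first treating $T$ with a single analytic singularity along a smooth center (so $\sigma$ is one blow-up) and then inducting on the length of the resolution, using the functoriality of density currents under proper modifications established in \cite{dinh-sibony:density}.
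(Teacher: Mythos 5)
Your reduction to the residual inequality along $Z$ is logically sound: restricting the positive currents $\pi^*T^m_{AW}$ and $S$ to the closed analytic set $\pi^{-1}(Z)$ and its complement is legitimate, Corollary \ref{cor:non-pp-analytic-sing} (the consequence of Theorem \ref{the_pluridden} you invoke) does give equality on the open part after localization, and the identity $\mathbf 1_{X\setminus Z}T^m_{AW}=\langle T^m\rangle$ is indeed standard for currents with analytic singularities. However, the remaining inequality
$\mathbf 1_{\pi^{-1}(Z)}\,\pi^*T^m_{AW}\le \mathbf 1_{\pi^{-1}(Z)}\,S$
is exactly where all the difficulty lies, and your proposal does not supply a proof of it. The ``tangent-cone/slicing analysis'' you describe---matching dilation scalings with orders of vanishing, inducting on the length of the resolution---is only a program; none of the delicate convergence statements are established, and you yourself flag this as ``the main obstacle.'' There is also a misattribution that hides a missing ingredient: you appeal to ``functoriality of density currents under proper modifications established in \cite{dinh-sibony:density},'' but the relevant fact---namely that for a bimeromorphic $h\colon X'\to X$ and a density current $S'$ of $(h^*T,\ldots,h^*T)$ one has $(D\widetilde{h})_*S'\le S$---is \emph{not} in \cite{dinh-sibony:density}. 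It is Proposition \ref{pro.AW} of the present paper, with a non-trivial proof that has to handle the non-properness of $D\widetilde h$ on $N\Delta$ and the comparison between admissible maps upstairs and downstairs. Without it, your plan has no mechanism to transfer residual mass from the resolution down to $X$.

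The paper's route is more economical and avoids your decomposition entirely. It passes to a log resolution $h\colon X'\to X$ making $h^*T$ a current with SNC divisorial singularities, uses the explicit density computation for such currents (Theorem \ref{thm:densityTm} and Corollary \ref{cor:densityTm2}) to get a unique density current $S'$ on $N\Delta_{X'}$ satisfying $S'\ge\pi_{X'}^*\big((\ddc v)^{m-1}\wedge[f=0]+(\ddc v)^m\big)$, then invokes Proposition \ref{pro.AW} to get $(D\widetilde h)_*S'\le S$, and finally pushes forward using the Andersson--Wulcan identity $T^m_{AW}=h_*\big((\ddc v)^{m-1}\wedge[f=0]+(\ddc v)^m\big)$. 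This yields the global chain $S\ge(D\widetilde h)_*S'\ge\pi_X^*T^m_{AW}$ in one stroke, with no separate treatment of $Z$ versus $X\setminus Z$ and no compactness hypothesis. To repair your proposal you would need to prove the residual inequality along $Z$, and the natural way to do so is precisely to establish and apply the modification comparison Proposition \ref{pro.AW}---at which point you recover the paper's argument and the $Z$ vs.\ $X\setminus Z$ split becomes unnecessary.
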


The paper is organized as follows. In Section \ref{sec2}, we prove Theorem \ref{thm:compatible-def-wedge-general}. Theorems \ref{the_pluridden} and \ref{th_AWdensity} are proved in Sections \ref{sec_nonpluri} and \ref{sec_nonproper} respectively. 

\textbf{Acnowledgements:} The first named author would like to thank Elizabeth Wulcan for fruitful discussions that served as inspiration for some of the results in this paper.


\section{Density product vs.\ classical product} \label{sec2}

This section is devoted to the proof of Theorem \ref{thm:compatible-def-wedge-general}.

Since our problem is of a local nature, we can assume that $X$ is a ball in $\C^n$ and $T_j := \ddc u_j$ on $X$ for $j=1,\ldots,m-1$.  A neighbourhood of $\Delta$ inside $X^m$ identifies with that of the zero section of the trivial bundle $\pi: (\C^n)^{m-1} \times X \to X$ via the change of coordinates
\begin{equation}
\label{eq:standard-tau}
\varrho(x^1, \ldots, x^m) = (x^1-x^m, \ldots, x^{m-1} -x^m, x^m) :=(y^1, \ldots, y^m) := (y',y^m):= y.
\end{equation}
By definition, any  admissible map $\tau$ is given by 
$$\tau(y',y^m)= \big(y', y^m + y' a(y)\big)+ O(|y'|^2),$$
where $a(y)$ is a matrix whose entries are complex smooth functions. Our results as well as the arguments below in fact holds for every bi-Lipschitz admissible map $\tau.$ But for simplicity, we only consider smooth admissible maps.              

Notice that \begin{equation}
\varrho_* (T_1 \otimes \cdots \otimes T_m) = \ddc \widetilde u_1 \wedge \cdots \wedge \ddc \widetilde u_{m-1} \wedge \widetilde T , 
\end{equation}
where $\widetilde u_j(y',y^m) =u_j(y^j + y^m)$ for $j=1,\ldots,m-1$ and $\widetilde T = \pi^* T$, $\pi(y',y^m)=y^m$.

Our first step to  prove Theorem \ref{thm:compatible-def-wedge-general} is to show that with this particular choice of admissible coordinate the desired result holds.

\begin{proposition} \label{pro_voitauthoi}
\begin{equation*} 
(A_\lambda)_* \big(\ddc \widetilde u_1\wedge \cdots \wedge \ddc \widetilde u_{m-1} \wedge \widetilde T\big) \, \,  \stackrel{|\lambda| \to \infty}{\longrightarrow}  \pi^*(\ddc u_1 \wedge \cdots \wedge \ddc u_{m-1} \wedge T). 
\end{equation*}
\end{proposition}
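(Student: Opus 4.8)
My plan is to reduce the statement to a one-variable scaling computation iterated $m-1$ times, using the recursive Bedford--Taylor structure guaranteed by Property $(\star)$. First I would unwind the action of $A_\lambda$ in the coordinates $y = (y^1,\dots,y^{m-1},y^m)$: here $A_\lambda$ sends $(y',y^m)$ to $(\lambda y', y^m)$, so $(A_\lambda)_*$ pulls back by $(y',y^m)\mapsto(\lambda^{-1}y',y^m)$. Consequently $(A_\lambda)^*\widetilde u_j (y',y^m) = u_j(\lambda^{-1}y^j + y^m)$, while $(A_\lambda)^*\widetilde T = \pi^* T$ is unchanged since $\widetilde T$ is pulled back from the $y^m$-factor. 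Thus the claim becomes
\[
\ddc_y\, u_1(\lambda^{-1}y^1 + y^m) \wedge \cdots \wedge \ddc_y\, u_{m-1}(\lambda^{-1}y^{m-1} + y^m) \wedge \pi^* T \;\longrightarrow\; \pi^*\big(\ddc u_1 \wedge \cdots \wedge \ddc u_{m-1} \wedge T\big)
\]
as $|\lambda|\to\infty$, where $\ddc_y$ is taken on $(\C^n)^{m-1}\times X$. The heuristic is clear: as $\lambda^{-1}\to 0$ each potential $u_j(\lambda^{-1}y^j+y^m)$ converges to $u_j(y^m)$, which is constant in $y^j$, and the limit of the $\ddc_y$ of that product should be the wedge product computed only in the $y^m$ directions, i.e. $\pi^*(\ddc u_1\wedge\cdots\wedge\ddc u_{m-1}\wedge T)$.

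To make this rigorous I would argue by induction on the number of factors, peeling off $u_1$ first. Approximate each $u_k$ from above by smooth psh functions $u_k^j \downarrow u_k$ on a slightly smaller ball; set $u_{k,\lambda}(y) := u_k(\lambda^{-1}y^k+y^m)$ and similarly $u_{k,\lambda}^j$. By the Bedford--Taylor recursive definition \eqref{eq:BT-def}, it suffices to show
\[
u_{1,\lambda}\, \ddc_y u_{2,\lambda} \wedge \cdots \wedge \ddc_y u_{m-1,\lambda} \wedge \pi^* T \;\longrightarrow\; u_1(y^m)\, \pi^*\big(\ddc u_2 \wedge \cdots \wedge \ddc u_{m-1} \wedge T\big)
\]
and then apply $\ddc_y$ to both sides. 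For the inductive step on the wedge $\ddc_y u_{2,\lambda}\wedge\cdots\wedge\ddc_y u_{m-1,\lambda}\wedge\pi^*T$, I would first handle the smooth case: for fixed smooth $u_k^j$, the functions $u_{k,\lambda}^j$ converge in $C^1_{loc}$ to $u_k^j(y^m)$ together with their differentials in the $y^k$-directions tending to zero, so the smooth forms $\ddc_y u_{2,\lambda}^j\wedge\cdots$ converge to $\pi^*(\ddc u_2^j\wedge\cdots\wedge\ddc u_{m-1}^j\wedge T)$ (here $T$ stays fixed, only the potentials scale). Then I would interchange the two limits $\lambda\to\infty$ and $j\to\infty$ using the monotone-convergence continuity built into Property $(\star)(ii)$, applied to the sequences $u_{k,\lambda}^j$ which decrease to $u_{k,\lambda}$ on the relevant ball, together with local mass bounds (Chern--Levine--Nirenberg type estimates) that are uniform in $\lambda$ because the sup-norms of $u_{k,\lambda}$ on a fixed compact set are controlled by those of $u_k$. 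The local integrability in Property $(\star)(i)$ guarantees $u_{1,\lambda}$ is integrable against the limiting measure-type current, so multiplying by $u_{1,\lambda}$ and passing to the limit is legitimate.

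The main obstacle is the exchange of limits: making sure that $\lim_{\lambda\to\infty}\lim_{j\to\infty}$ equals $\lim_{j\to\infty}\lim_{\lambda\to\infty}$ for the currents $u^j_{k,\lambda}\ddc_y u^j_{k+1,\lambda}\wedge\cdots$. This is exactly the point where Property $(\star)$ is indispensable — it provides the continuity of the wedge product under decreasing sequences of smooth potentials, but one must verify that the convergence is uniform enough in $\lambda$, which I expect to follow from a diagonal extraction argument combined with uniform Chern--Levine--Nirenberg mass estimates on a fixed compact neighbourhood of the zero section (the sup-norms and the domains of definition of $u_{k,\lambda}$ being independent of $\lambda$ once $|\lambda|\ge 1$). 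A secondary technical point is that everything should be checked to be independent of the chosen approximating sequences $u_k^j$, which is part of the content of Property $(\star)$ and so causes no real trouble. Once Proposition \ref{pro_voitauthoi} is established for the special coordinate $\varrho$, passing to a general admissible $\tau$ — which differs from $\varrho$ by a map of the form $(y',y^m)\mapsto (y', y^m+y'a(y)+O(|y'|^2))$ whose effect is washed out after applying $A_{\lambda}$ and letting $|\lambda|\to\infty$ — will be carried out in the remainder of Section \ref{sec2}.
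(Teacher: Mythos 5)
Your reduction to the coordinates $\varrho$, the inductive skeleton (peel off $u_1$, prove convergence of $\widetilde u_1$ times the remaining product, then apply $\ddc$), and the heuristic that the scaled potentials converge to $u_j(y^m)$ all match the paper. But the step you yourself flag as the main obstacle --- exchanging $\lim_{\lambda\to\infty}$ and $\lim_{j\to\infty}$ --- is a genuine gap, and the tools you propose to close it do not work under Property $(\star)$ alone. First, Chern--Levine--Nirenberg estimates require locally \emph{bounded} potentials, whereas here the $u_k$ are only assumed to satisfy $(\star)$ (they may be unbounded, e.g.\ with analytic singularities), so there is no a priori mass bound uniform in $\lambda$ from CLN; ``diagonal extraction'' then only produces subsequential limits and does not identify them. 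Second, Property $(\star)(ii)$ is a statement about products of the potentials of $T_1,\dots,T_{m-1}$ with $T$ \emph{on $X$}; it says nothing about the mixed-variable products $\ddc_y u^j_{2,\lambda}\wedge\cdots\wedge\pi^*T$ on $(\C^n)^{m-1}\times X$, so invoking it for the sequences $u^j_{k,\lambda}\downarrow u_{k,\lambda}$ is not legitimate as stated. Third, your argument works throughout with the full $\ddc_y$, but the dangerous terms in the limit are exactly the horizontal ($y'$-) derivatives of the non-smooth potentials: nothing in your sketch rules out that they contribute mass as $|\lambda|\to\infty$.

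The paper's proof is designed to avoid the double limit altogether. It tests $(A_\lambda)_*(\widetilde u_1 R_m)$, where $R_m$ is the product taken with $\ddc_{y^m}$ only, against product forms $\Phi_{1,\chi}(y')\wedge\Phi_2(y^m)$ with $\Phi_{1,\chi}$ a full-bidegree radial bump; by Fubini this pairing equals $\langle u_1^\lambda\,\ddc u_2^\lambda\wedge\cdots\wedge\ddc u_{m-1}^\lambda\wedge T,\Phi_2\rangle$ on $X$, where $u_j^\lambda$ is the convolution of $u_j$ with the scaled bump --- a smooth psh function decreasing to $u_j$ as $|\lambda|\to\infty$. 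Thus Property $(\star)(ii)$ applies directly with $\lambda$ itself as the regularization parameter, and the uniform mass bounds come out of this convergence rather than from CLN. The discrepancy between $\ddc\widetilde u_j$ and $\ddc_{y^m}\widetilde u_j$ (your missing horizontal terms) is then handled separately: a regularization lemma shows any limit point of $(A_\lambda)_*(\widetilde u_1\,\ddc\widetilde u_2\wedge\cdots\wedge\widetilde T)$ is bounded above by $\pi^*(u_1\,\ddc u_2\wedge\cdots\wedge T)$, and a bidegree-splitting/positivity argument on test forms $\Phi_1(y')\wedge\Phi_2(y^m)$ shows the difference with $\widetilde u_1R_m$ tends to $0$. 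To repair your proposal you would need substitutes for precisely these two ingredients; as written, the exchange of limits and the control of the $y'$-derivatives are asserted rather than proved.
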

We will need the following lemma. 

\begin{lemma} \label{le_hoithulambda2} Let $R$ be a positive closed current on $(\C^n)^{m-1} \times X$ and let $\lambda_k \to \infty$ be such that $(A_{\lambda_k})_*R$ converges weakly to $\pi^*S_\infty$ for some current $S_\infty$ on $X$. Let $u_j$ and $\widetilde u_j$ be as above and suppose that $u_j$ is locally integrable with respect to $S_\infty$. If  $R_j$ is a limit point of the sequence $(A_{\lambda_k})_*(\widetilde{u}_j R)$ as $\lambda_k \rightarrow \infty$, then $R_j \le \pi^*( u_j  S_\infty)$.   
\end{lemma}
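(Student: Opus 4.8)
The plan is to exploit the scaling structure of $A_\lambda$ and the fact that $\widetilde u_j$ is, by construction, "almost" the pullback $\pi^* u_j$ in the limit. Write $\widetilde u_j(y', y^m) = u_j(y^j + y^m)$. Under $A_{\lambda}$, a point $(y', y^m)$ pulls back from $(\lambda^{-1} y', y^m)$, so that $A_{\lambda}^* \widetilde u_j$ evaluated at $(y', y^m)$ equals $u_j(\lambda^{-1} y^j + y^m)$, which converges pointwise (and in $L^1_{loc}$, by the standard stability of psh functions under such affine reparametrisations) to $u_j(y^m) = \pi^* u_j$. Thus morally $(A_{\lambda_k})_*(\widetilde u_j R) = (A_{\lambda_k})_*\big((A_{\lambda_k}^* \pi^* u_j) R\big) + \text{error}$, and the first term is $\pi^* u_j \cdot (A_{\lambda_k})_* R \to \pi^* u_j \cdot \pi^* S_\infty = \pi^*(u_j S_\infty)$. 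The content of the lemma is that the limit $R_j$ is \emph{bounded above} by this, the loss of mass being accounted for by the "error" and by the lower semicontinuity inherent in multiplying an unbounded psh function against a weak limit.

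The key steps, in order, are as follows. First, regularise: fix a sequence of smooth psh functions $u_j^{(\ell)} \searrow u_j$ on a slightly smaller domain, and set $\widetilde u_j^{(\ell)}(y', y^m) = u_j^{(\ell)}(y^j + y^m)$, which is smooth and decreases to $\widetilde u_j$. Since $\widetilde u_j \le \widetilde u_j^{(\ell)}$ and $R \ge 0$, we have $\widetilde u_j R \le \widetilde u_j^{(\ell)} R$, hence $(A_{\lambda_k})_*(\widetilde u_j R) \le (A_{\lambda_k})_*(\widetilde u_j^{(\ell)} R)$ as currents, and passing to the limit, $R_j \le \limsup_k (A_{\lambda_k})_*(\widetilde u_j^{(\ell)} R)$ for every $\ell$ — here one should be slightly careful and test against a fixed nonnegative test form, using that weak limits preserve the inequality. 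Second, for the smooth function $\widetilde u_j^{(\ell)}$, analyze $(A_{\lambda_k})_*(\widetilde u_j^{(\ell)} R)$ directly: for a test form $\varphi$ one has $\langle (A_{\lambda_k})_*(\widetilde u_j^{(\ell)} R), \varphi\rangle = \langle R, \widetilde u_j^{(\ell)} \cdot A_{\lambda_k}^* \varphi\rangle$, and since $\widetilde u_j^{(\ell)}$ is continuous with $\widetilde u_j^{(\ell)}(y', y^m) \to u_j^{(\ell)}(y^m)$ uniformly on compacts as we further rescale the $y'$ variable appearing through $A_{\lambda_k}^*\varphi$, one can replace $\widetilde u_j^{(\ell)}$ by $\pi^* u_j^{(\ell)}$ up to an error tending to $0$ (controlled by the local mass of $R$, which is uniformly bounded on compacts because $R$ is positive closed). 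This yields $\lim_k (A_{\lambda_k})_*(\widetilde u_j^{(\ell)} R) = \pi^* u_j^{(\ell)} \cdot \pi^* S_\infty = \pi^*(u_j^{(\ell)} S_\infty)$, the last equality since $u_j^{(\ell)}$ is continuous. Third, combine: $R_j \le \pi^*(u_j^{(\ell)} S_\infty)$ for all $\ell$, and since $u_j^{(\ell)} \searrow u_j$ with $u_j \in L^1(S_\infty)$, monotone convergence gives $\pi^*(u_j^{(\ell)} S_\infty) \to \pi^*(u_j S_\infty)$, whence $R_j \le \pi^*(u_j S_\infty)$.

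The main obstacle is the second step — justifying that one may swap $\widetilde u_j^{(\ell)}$ for $\pi^* u_j^{(\ell)}$ inside the pairing with $(A_{\lambda_k})_* R$ uniformly in $k$. The subtlety is that $A_{\lambda_k}^*\varphi$ becomes concentrated near the zero section in the $y'$-directions as $\lambda_k \to \infty$, and one must check that on the shrinking support where the integrand lives, $\widetilde u_j^{(\ell)}(y', y^m) = u_j^{(\ell)}(y^j + y^m)$ is close to $u_j^{(\ell)}(y^m)$ — this follows from continuity of $u_j^{(\ell)}$ together with a quantitative bound on the mass of $R$ in a neighbourhood of the zero section, but one has to be careful that the mass of $(A_{\lambda_k})_* R$ on a fixed compact set stays bounded, which is where positivity and closedness of $R$ (via the hypothesis that $(A_{\lambda_k})_* R \to \pi^* S_\infty$) enter. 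A secondary technical point is ensuring the regularising sequences $\widetilde u_j^{(\ell)}$ are genuinely decreasing and smooth on the relevant domains, which is routine since $u_j^{(\ell)} \searrow u_j$ transfers directly through the affine substitution $y \mapsto y^j + y^m$.
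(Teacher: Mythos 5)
Your proposal is correct and follows essentially the same route as the paper: regularise $u_j$ by a decreasing sequence of smooth (continuous) psh functions, use their continuity to see that $(A_{\lambda_k})_*\widetilde u_{j,\epsilon}\to\pi^*u_{j,\epsilon}$ locally uniformly and hence $(A_{\lambda_k})_*(\widetilde u_{j,\epsilon}R)\to\pi^*(u_{j,\epsilon}S_\infty)$, then exploit $\widetilde u_j\le\widetilde u_{j,\epsilon}$ and positivity of $R$ to bound $R_j\le\pi^*(u_{j,\epsilon}S_\infty)$, and finally let the regularisation parameter tend to its limit using the local integrability of $u_j$ with respect to $S_\infty$. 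The technical points you flag (uniform local mass bounds and testing the inequality against nonnegative forms) are exactly the ones implicit in the paper's argument.
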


\begin{proof} This is a consequence of \cite[Prop. 3.2]{Fornaess_Sibony}. We give here a direct proof for readers' convenience.   

Let $u_{j,\epsilon}$ be a regularization of $u_j$ by convolution such that $u_{j,\epsilon}$ decreases pointwise to $u_j$ as $\epsilon \to 0$.  Then, for fixed $\lambda$, we have $(A_\lambda)_*\widetilde{u}_{j,\epsilon}  \searrow (A_\lambda)_*\widetilde{u}_{j}$ as $\epsilon \rightarrow 0$,  where $\widetilde{u}_{j,\epsilon}(y^1, \ldots, y^m):= u_{j,\epsilon}(y^j+y^m).$ 
Since the $u_{j,\epsilon}$ are continuous we have, for fixed $\epsilon$, that $(A_\lambda)_*\widetilde{u}_{j,\epsilon} \rightarrow  \pi^* u_{j,\epsilon}$ locally uniformly as $\lambda \rightarrow \infty$. 

By the above convergence and the fact that $(A_{\lambda_k})_*R \to \pi^*S_\infty$, we have $$\lim_{k \rightarrow \infty} (A_{\lambda_k})_*(\widetilde{u}_{j,\epsilon} R) =  \pi^*(u_{j,\epsilon} S_\infty)$$ for each $\epsilon$. Combined with the fact that $(A_{\lambda_k})_*  (\widetilde{u}_{j,\epsilon} R) \ge (A_{\lambda_k})_* (\widetilde{u}_{j} R)$ we get that $ R_j\le \pi^* (u_{j,\epsilon} S_\infty)$
for every $\epsilon.$ Taking $\epsilon \rightarrow 0$ yields the desired result.
\end{proof}

Our proof of Proposition \ref{pro_voitauthoi} will be by induction on $m$, so let
\begin{equation} \label{eq:Rm}
R_m:= \ddc_{y^m} \widetilde{u}_2 \wedge \cdots \wedge \ddc_{y^m} \widetilde{u}_{m-1} \wedge \pi^* T,
\end{equation}

where $\ddc_{y^m}$ means that we only consider (weak) derivatives with respect to the $y^m$ variable. In order to see that $R_m$ is well defined we first notice  that $\widetilde{u}_{m-1} \pi^* T$ has locally finite mass. Indeed, for a compact $K = K_1 \times K_2 \subset (\C^n)^{m-1} \times X$ and a test form $\Phi = \Phi_1(y') \wedge \Phi_2(y^m)$ supported in $K$ we have
\begin{align*}
\int_K \widetilde{u}_{m-1} \pi^* T \wedge \Phi = \int_{K_2} \left( \int_{K_1} u_{m-1}(y^{m-1} + y^m)  \wedge \Phi_1(y')\right)  \pi^* T \wedge \Phi_2(y^m)
\end{align*}
and the inner integral on the right hand side is a smooth function of $y^m$ (since it is a convolution). Hence the left hand side is finite.

Therefore $\widetilde{u}_{m-1} \pi^* T$ is well defined as a current in $(\C^n)^{m-1} \times X$ and  we can define  $\ddc_{y^m}\widetilde{u}_{m-1} \wedge \pi^* T := \ddc_{y^m}(\widetilde{u}_{m-1} \pi^* T)$. Applying the same procedure recursively allows us to show that $R_m$ is well defined.

Notice that $R_m$ is a positive current (in particular it is of order zero), but not necessarily closed.  For $1 \le  j \le m-1,$ let $u_{j,\epsilon}, \widetilde{u}_{j,\epsilon}$ be as in the proof of Lemma \ref{le_hoithulambda2}. Put 
$$R_{m,\epsilon}:= \ddc_{y^m} \widetilde{u}_{2,\epsilon} \wedge \cdots \wedge \ddc_{y^m} \widetilde{u}_{m-1,\epsilon} \wedge \pi^* T.$$
The definition of $R_m$ imply that $\widetilde{u}_1$ is locally integrable with respect to $R_m$ and 
\begin{align} \label{lim_Rmeo}
\widetilde{u}_{1, \epsilon} R_{m,\epsilon}  \longrightarrow \widetilde{u}_1 R_m
\end{align}
as $\epsilon \to 0.$ 

\begin{lemma}\label{le_claim1}  Let $R_m$ be as in \emph{(\ref{eq:Rm})}. Then
\begin{align} \label{lim_Tm2}
 (A_\lambda)_*(\widetilde{u}_1 R_m) \rightarrow  \pi^* \left( u_{1}  \ddc u_2 \wedge  \cdots \wedge  \ddc u_{m-1} \wedge T \right)
\end{align}
as currents on $(\C^n)^{m-1} \times X$.
\end{lemma}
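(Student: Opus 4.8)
The plan is to prove Lemma \ref{le_claim1} by induction on $m$, paralleling the recursive structure in the definition of $R_m$ and of the classically-defined wedge product. For the base case $m=2$ we have $R_2 = \pi^* T$ and we must show $(A_\lambda)_*(\widetilde u_1 \pi^* T) \to \pi^*(u_1 T)$. This follows from Lemma \ref{le_hoithulambda2} (with $R = \pi^* T$, $S_\infty = T$, which gives the upper bound $R_1 \le \pi^*(u_1 T)$ for any limit point) together with a matching lower bound obtained by a regularization argument: approximating $u_1$ from \emph{below} by continuous functions (e.g. restrictions of $u_{1,\epsilon}$ to slightly smaller balls, or using that $u_1$ is upper semicontinuous hence an increasing limit of continuous functions away from $-\infty$ is too delicate — better: argue directly that $(A_\lambda)_* \widetilde u_{1} \to \pi^* u_1$ in $L^1_{loc}(\pi^* T)$, since $\widetilde u_1(y', y^m) = u_1(y^j + \lambda^{-1}\cdot(\text{something}))$ under $A_\lambda$ and one has continuity of translation in $L^1(T)$). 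Since the mass cannot drop in the limit (closedness is not needed here, only that limits of the positive currents $\widetilde u_{1,\epsilon} R_m$ control things), the two bounds force equality.

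For the inductive step, write $R_m = \ddc_{y^m} \widetilde u_2 \wedge R_m'$ where $R_m'$ is the analogous object built from $u_3, \ldots, u_{m-1}, T$, so $R_m' = \ddc_{y^m}\widetilde u_3 \wedge \cdots \wedge \ddc_{y^m}\widetilde u_{m-1} \wedge \pi^* T$. The key identity is
\begin{equation*}
\widetilde u_1 R_m = \widetilde u_1 \ddc_{y^m}(\widetilde u_2 R_m'),
\end{equation*}
and one integrates by parts (in the $y^m$ variables only, which is legitimate since all the currents involved are pullbacks in the $y'$ directions and the $\ddc_{y^m}$ derivatives are the only ones appearing): morally
\begin{equation*}
(A_\lambda)_*(\widetilde u_1 R_m) = (A_\lambda)_*\big(\ddc_{y^m}(\widetilde u_1) \wedge \widetilde u_2 R_m' \big) + \text{(terms handled by the closedness/commutation of } \ddc_{y^m} \text{ with } (A_\lambda)_*).
\end{equation*}
More precisely, since $A_\lambda$ acts fiberwise and commutes with $\ddc_{y^m}$ up to the scaling already absorbed in $R_m$, one uses the regularizations $R_{m,\epsilon}$, for which everything is smooth, writes $\widetilde u_{1,\epsilon} R_{m,\epsilon} = \widetilde u_{1,\epsilon}\ddc_{y^m}(\widetilde u_{2,\epsilon} R_{m,\epsilon}')$, integrates by parts cleanly, applies $(A_\lambda)_*$, lets $\lambda \to \infty$ using the inductive hypothesis applied to the tuple $(u_2, \ldots, u_{m-1}, T)$ (which produces $\pi^*(u_2 \,\ddc u_3 \wedge \cdots \wedge \ddc u_{m-1}\wedge T)$ and its $\ddc_{y^m}$-derivative $\pi^*(\ddc u_2 \wedge \cdots \wedge T)$), and finally lets $\epsilon \to 0$ invoking \eqref{lim_Rmeo} and Property $(\star)(ii)$ to pass the $u_1$-factor (and the lower-order $u_k$ factors) to the limit. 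Property $(\star)(i)$ guarantees the intermediate currents $\widetilde u_{k} \ddc_{y^m}\widetilde u_{k+1}\wedge\cdots$ have locally finite mass so every step makes sense.

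The main obstacle I anticipate is controlling the order of the two limits, $\lambda \to \infty$ and $\epsilon \to 0$, and more specifically justifying that $(A_\lambda)_* (\widetilde u_{1,\epsilon} R_{m,\epsilon}) \to (A_\lambda)_*(\widetilde u_1 R_m)$ \emph{uniformly in $\lambda$} in a suitable sense, so that one may interchange limits. The cleanest route is probably to avoid interchanging: first fix $\epsilon$, send $\lambda \to \infty$ to get $\pi^*(u_{1,\epsilon}\,\ddc u_{2,\epsilon}\wedge\cdots\wedge\ddc u_{m-1,\epsilon}\wedge T)$ (here the inductive hypothesis must be applied with the smooth $u_{j,\epsilon}$, which is fine since Property $(\star)$ trivially holds for smooth psh functions), then send $\epsilon \to 0$ using $(\star)(ii)$ and the monotone-convergence compatibility of the Bedford–Taylor product; and separately control the diagonal error. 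One must also be slightly careful that the upper semicontinuity gives only $\limsup (A_\lambda)_*(\widetilde u_1 R_m) \le \pi^*(u_1 \ddc u_2\wedge\cdots)$ a priori, so the matching lower bound — coming from the integration-by-parts identity combined with the fact that $\ddc_{y^m}$ of the (genuine, $\lambda$-independent after pushforward) limit is the closed positive current $\pi^*(T_1\wedge\cdots\wedge T)$ — is where the real work lies; this is exactly where Property $(\star)$ is indispensable.
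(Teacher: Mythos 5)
There is a genuine gap, and it sits exactly where you flag it: the interchange of the limits $\epsilon\to 0$ and $\lambda\to\infty$, i.e.\ the ``diagonal error''. Your scheme only computes $\lim_{\epsilon\to 0}\lim_{\lambda\to\infty}(A_\lambda)_*(\widetilde u_{1,\epsilon}R_{m,\epsilon})$, and monotonicity in $\epsilon$ only yields the one-sided bound of Lemma \ref{le_hoithulambda2} (an upper bound on limit points), so the matching lower bound for $(A_\lambda)_*(\widetilde u_1 R_m)$ itself is never produced. The paper's proof of this lemma is not an induction and never interchanges limits: one pairs with product test forms $\Phi_\chi=\Phi_{1,\chi}\wedge\Phi_2$, where $\Phi_{1,\chi}$ is built from a radial kernel $\chi$ of total mass $1$, and Fubini gives the \emph{exact identity} $\langle (A_\lambda)_*(\widetilde u_{1,\epsilon}R_{m,\epsilon}),\Phi_\chi\rangle=\langle R_{\lambda,\epsilon},\Phi_2\rangle$, where $R_{\lambda,\epsilon}$ is the classical current built from the radial averages $u_{j,\epsilon}^\lambda(y^m)=\int u_{j,\epsilon}(\lambda^{-1}y^j+y^m)\chi(y^j)$. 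Letting $\epsilon\to 0$ at \emph{fixed} $\lambda$, via \eqref{lim_Rmeo}, gives $\langle (A_\lambda)_*(\widetilde u_1 R_m),\Phi_\chi\rangle=\langle R_\lambda,\Phi_2\rangle$ as in \eqref{eq_AlmabRkalmm}; and because radial averages of psh functions are monotone in the radius, the $u_j^\lambda$ are smooth psh functions decreasing to $u_j$ as $|\lambda|\to\infty$, so Property $(\star)(ii)$ applies with $\lambda$ itself as the approximation parameter and yields \eqref{lim_Tm2} directly (plus a uniform mass bound and density of the forms $\Phi_\chi$). This conversion of the $\lambda$-scaling into a decreasing family of smooth psh potentials on $X$ is the key idea missing from your argument: Property $(\star)(ii)$ is a statement about decreasing smooth approximations in the $y^m$ variable on $X$, and nothing in your construction produces such a family indexed by $\lambda$.

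Two further points. The integration-by-parts induction in your step is not justified: moving $\ddc_{y^m}$ off $\widetilde u_2 R_m'$ onto $\widetilde u_1\Phi$ creates cross terms of the type $\widetilde u_2\, R_m'\wedge d\widetilde u_1\wedge d^c\Phi$, and neither the local finiteness of such gradient terms nor their convergence follows from Property $(\star)$; the paper never needs $\ddc\widetilde u_1$ at this stage (the comparison of $\widetilde u_1 R_m$ with $\widetilde u_1\,\ddc\widetilde u_2\wedge\cdots\wedge\widetilde T$ is done afterwards, in the proof of Proposition \ref{pro_voitauthoi}, by a bidegree argument combined with Lemma \ref{le_hoithulambda2}, and the induction lives there, not in this lemma). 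Finally, in your base case the appeal to ``continuity of translation in $L^1(T)$'' is not a valid general fact when $u_1$ is merely locally integrable with respect to an arbitrary positive closed $T$; the correct mechanism is again the radial-average monotonicity, which reduces the statement to monotone convergence against the trace measure of $T$.
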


\begin{proof}
Notice that the left hand side in (\ref{lim_Tm2}) has degree zero in $y'$, so in order to prove the above convergence it suffices to test $(A_\lambda)_*(\widetilde{u}_1 R_m)$ against forms of the type  $\Phi = \Phi_1(y') \wedge \Phi_2(y^m)$ where $\Phi_1$ has full bidegree $(n(m-1),n(m-1))$.

Let $\chi$ be a positive smooth radial function with compact support in $\C^n$ such that $\int_{\C^n}  \chi \cdot i^n \,  d y \wedge d \bar{y}=1.$ Put 
$$\widetilde{\chi}(y^1, \ldots, y^{m-1}):= \chi(y^1) \cdots \chi(y^{m-1}).$$
Let 
$\Phi_{1,\chi}(y'): =  i^{n(m-1)} \widetilde{\chi}(y') \diff y' \wedge \diff \overline{y'}$ and  $\Phi_2$ be a form in $y^m$  with compact support in $X$. Put $\Phi_{\chi}:= \Phi_{1,\chi} \wedge \Phi_2.$ 
We have $\pi_* (\Phi_\chi) = \Phi_2$ and $\langle S, \Phi_2 \rangle = \langle \pi^* S,\Phi_\chi \rangle$ for any $(m-2+p,m-2+p)$-current $S$ on $X$. Put  
$$ u_{j,\epsilon}^\lambda (y^m) =  \int_{y^j} u_{j,\epsilon}(\lambda^{-1}y^j + y^m) \chi(y^j) (i^n \diff y^j \wedge \diff \overline{y^j})$$
 for $j=1,\ldots,m-1$ and
$$R_{\lambda,\epsilon} =  u_{1,\epsilon}^\lambda  \, \ddc u_{2,\epsilon}^\lambda \wedge  \cdots \wedge  \ddc u_{m-1,\epsilon}^\lambda \wedge T$$
as a current  in $(\C^n,y^m).$  Let $u_j^\lambda, R_\lambda$ be defined in the same way as $u_{j,\epsilon}^\lambda, R_{\lambda,\epsilon}$ respectively but with  $u_{j,\epsilon}$ replaced by $u_j$. Observe that the defining formula of $u_j^\lambda$ is a convolution with a positive smooth radial function, hence $u_j^\lambda$ is smooth and decreases to $u_j$ as $|\lambda| \to \infty$. Moreover, $u_{j,\epsilon}^\lambda$ is also smooth and   for $\lambda$ fixed,   $u_{j,\epsilon}^\lambda$ is smooth and  decreases to $u_j^\lambda$ as $\epsilon \to 0.$      We have
\begin{align*}
 (A_\lambda)_*(\widetilde{u}_{1,\epsilon} R_{m,\epsilon}) = \widetilde{u}_{1,\epsilon} \circ A_\lambda^{-1} \ddc_{y^m} (\widetilde{u}_{2,\epsilon} \circ A_\lambda^{-1}) \wedge \cdots \wedge \ddc_{y^m} (\widetilde{u}_{m-1,\epsilon} \circ A_\lambda^{-1}) \wedge \pi^* T.
\end{align*}
Applying Fubini's Theorem to the last equality gives
\begin{align*}
\langle (A_\lambda)_*(\widetilde{u}_{1,\epsilon} R_{m,\epsilon}), \Phi_\chi   \rangle =\langle R_{\lambda,\epsilon}, \Phi_2 \rangle.  
\end{align*}
Letting $\epsilon \to 0$ in the two sides of the last equality and using (\ref{lim_Rmeo}), we obtain that 
\begin{align} \label{eq_AlmabRkalmm}
\langle (A_\lambda)_*(\widetilde{u}_{1} R_{m}), \Phi_\chi   \rangle =\langle R_{\lambda}, \Phi_2 \rangle. 
\end{align}

 By hypothesis, $(T_1,\ldots, T_{m-1},T)$ satisfy Property ($\star$), so $$R_\lambda \to  u_{1}  \ddc u_2 \wedge  \cdots \wedge  \ddc u_{m-1} \wedge T$$  as $|\lambda| \to \infty$ because the smooth p.s.h. functions $u_j^\lambda$ decreases to $u_j$.  Combining this with (\ref{eq_AlmabRkalmm}) yields
\begin{align} \label{lim_lamRmpho}
 \langle (A_\lambda)_*(\widetilde{u}_1 R_m), \Phi_\chi  \rangle = \langle R_\lambda, \Phi_2 \rangle \longrightarrow & \langle u_{1}  \ddc u_2 \wedge  \cdots \wedge  \ddc u_{m-1} \wedge T, \Phi_2 \rangle \\ 
\nonumber
=  & \langle \pi^*(u_{1}  \ddc u_2 \wedge  \cdots \wedge  \ddc u_{m-1} \wedge T), \Phi_\chi \rangle.
\end{align}

In particular, this implies that the family $\{(A_\lambda)_*(\widetilde{u}_1 R_m)\}_{|\lambda| \geq 1}$ is of bounded mass norm on  fixed compact subsets of $(\C^{n})^{m-1} \times X$ uniformly in $\lambda.$ Combined with (\ref{lim_lamRmpho}) and the fact  that the forms $\Phi_\chi$ span a dense subspace in the set of test forms, this gives (\ref{lim_Tm2}). This  finishes the proof.
\end{proof}

\begin{proof}[Proof of Proposition \ref{pro_voitauthoi}] Recall that we need to show that
\begin{equation} \label{eq:density-convergence}
(A_\lambda)_* \big( \ddc \widetilde u_1\wedge \cdots \wedge \ddc \widetilde u_{m-1} \wedge \widetilde T \big) \,\, \stackrel{|\lambda| \to \infty}{\longrightarrow} \pi^*(\ddc u_1 \wedge \cdots \wedge \ddc u_{m-1} \wedge T). 
\end{equation}

We will do so by induction. For $m=1$ there is nothing to be shown. Suppose now that  (\ref{eq:density-convergence}) holds for $(u_2,\ldots,u_{m-1},T)$. This means that
\begin{equation} \label{eq:induction-step}
(A_\lambda)_*\big(\ddc \widetilde{u}_2 \wedge \cdots \wedge \ddc \widetilde{u}_{m-1} \wedge \widetilde T\big)  \stackrel{|\lambda| \to \infty}{\longrightarrow}  \pi^*(\ddc u_2 \wedge  \cdots \wedge  \ddc u_{m-1} \wedge T).
\end{equation}

Let $R_m$ be the current defined in (\ref{eq:Rm}). We claim that
\begin{align} \label{eq_limitAlmabdaddcu1loaitru}
\lim_{\lambda \rightarrow \infty} (A_\lambda)_* \big[\widetilde{u}_1 \ddc \widetilde{u}_2 \wedge \cdots \wedge \ddc \widetilde{u}_{m-1} \wedge \widetilde T -  \widetilde{u}_1 R_m\big]=0.
\end{align}
Indeed,  let $\nu_\infty$ be a limit point of $(A_\lambda)_*\big(\widetilde{u}_1 \ddc \widetilde{u}_2 \wedge \cdots \wedge \ddc \widetilde{u}_{m-1} \wedge \widetilde T \big)$. Recall that $u_1$ is assumed to be locally integrable with respect to  $\ddc u_2 \wedge  \cdots \wedge  \ddc u_{m-1} \wedge T$. By Lemma \ref{le_hoithulambda2} and (\ref{eq:induction-step}),  we have  
\begin{align} \label{ine_nuinfty}
\nu_\infty \le \pi^*(u_1 \ddc u_2 \wedge  \cdots \wedge  \ddc u_{m-1} \wedge T).
\end{align}
 Let  $\Phi(y', y^m) =\Phi_1(y') \wedge \Phi_2(y^m)$ be a weakly positive test form on $(\C^n)^m \times X$. These forms generate the space of all test forms.  If $\Phi_1$ is not of full bi-degree $\big(n(m-1), n(m-1)\big)$ then $\Phi_2$ has bi-degree strictly bigger than $(n-m-p+2,n-m-p+2)$, so  $ \langle(A_\lambda)_* \big( \widetilde{u}_1R_m\big),\Phi \rangle= 0$ for every $\lambda$ by degree reasons. Moreover, also by degree reasons,  the right hand side of (\ref{ine_nuinfty})  vanishes when paired with $\Phi$, so (\ref{ine_nuinfty})   implies that $\langle(A_\lambda)_*\big(\widetilde{u}_1 \ddc \widetilde{u}_2 \wedge \cdots \wedge \ddc \widetilde{u}_{m-1} \wedge \widetilde T \big), \Phi\rangle$ converges to $0$ as $|\lambda| \to \infty$. So (\ref{eq_limitAlmabdaddcu1loaitru}) holds when pairing with these forms.

Suppose now that $\Phi_1$ is of full bi-degree $\big(n(m-1), n(m-1)\big)$). Then  $\ddc \Phi = \ddc_{y^m} \Phi$, so
 $$\big \langle \widetilde{u}_1 \ddc \widetilde{u}_2 \wedge \cdots \wedge \ddc \widetilde{u}_{m-1} \wedge \widetilde T, \Phi \big \rangle = \big \langle \widetilde{u}_1 R_m, \Phi  \big\rangle,$$ which shows that  (\ref{eq_limitAlmabdaddcu1loaitru}) holds when pairing with $\Phi$. This proves  (\ref{eq_limitAlmabdaddcu1loaitru}).
 
Combining  (\ref{eq_limitAlmabdaddcu1loaitru}) and Lemma \ref{le_claim1} we get
\begin{equation} 
(A_\lambda)_* ( \widetilde u_1 \ddc \widetilde u_2\wedge \cdots \wedge \ddc \widetilde u_{m-1} \wedge \widetilde T)  \stackrel{|\lambda| \to \infty}{\longrightarrow} \pi^*(u_1 \ddc u_2\wedge \cdots \wedge \ddc u_{m-1} \wedge T),
\end{equation}
which gives (\ref{eq:density-convergence}) after taking $\ddc$ on both sides. This completes the proof.
\end{proof}

\begin{proposition} \label{pro_taunga} Let $R$ be a closed positive current on $X^m$ and let $\varrho$ be an admissible map from an open neighbourhood of $\Delta$ to the normal bundle $N\Delta$.  Assume that there is a sequence $\{\lambda_{k}\}_{k \in \N}$ of complex numbers converging to $\infty$ for which 
$$R_\infty:= \lim_{k \to \infty} (A_{\lambda_k})_* \varrho_* R$$
exists. Then $R_\infty$ is $\Delta$-conic, \emph{i.e,} $(A_\lambda)^* R_\infty = R_\infty$ for every $\lambda \in \C^*$.

Let $\Phi$  be a smooth form with compact support in $N \Delta$ and denote $\Phi_\lambda:= (A_\lambda)^* \Phi$. If $\tau$ is another admissible map then
\begin{align} \label{ine_taunaga}
\big|\langle R, \varrho^* \Phi_{\lambda_k} -  \tau^* \Phi_{\lambda_k} \rangle \big| \le C  |\lambda_k|^{-1}.
\end{align}
for some constant $C$ independent of $\lambda$. In particular,  $\lim_{k \to \infty} (A_{\lambda_k})_* \tau_* R$ also  exists and is equal to $R_\infty.$ 
\end{proposition}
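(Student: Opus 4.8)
The plan is to establish the two assertions in turn. For the $\Delta$-conic property of $R_\infty$, I would observe that for any fixed $\mu \in \C^*$ we have $A_\mu^* (A_{\lambda_k})_* \varrho_* R = (A_{\lambda_k})_* (A_{\mu})^{-1}_* \varrho_* R \cdot (\text{rescaling})$; more precisely, since $A_\mu \circ A_{\lambda_k} = A_{\mu \lambda_k} = A_{\lambda_k} \circ A_\mu$, one gets $(A_\mu)^* (A_{\lambda_k})_* \varrho_* R = (A_{\lambda_k})_* (A_\mu)^* \varrho_* R$. Now $(A_\mu)^* \varrho_* R$ differs from $\varrho_* R$ only near the zero section in a controlled way: writing $\varrho_* R$ and pushing forward, the terms that are not $A_\mu$-invariant decay after applying $(A_{\lambda_k})_*$ and letting $k \to \infty$ (this is the standard argument, e.g. as in \cite{dinh-sibony:density}, showing that limits of $(A_{\lambda_k})_* \varrho_* R$ are automatically conic). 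Passing to the limit gives $(A_\mu)^* R_\infty = R_\infty$.

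For the main estimate (\ref{ine_taunaga}), the key point is to compare the two admissible maps $\varrho$ and $\tau$. Since both restrict to the identity on $\Delta$ and have the identity differential in the normal direction, the composition $\sigma := \tau \circ \varrho^{-1}$ is a diffeomorphism of a neighbourhood of the zero section of $N\Delta$ fixing $\Delta$ pointwise, with differential along $\Delta$ equal to the identity; hence in the fiber coordinate $v$ (and base coordinate $x$) we have $\sigma(x,v) = (x + O(|v|^2),\, v + O(|v|^2))$. I would then write $\langle R, \varrho^*\Phi_{\lambda_k} - \tau^*\Phi_{\lambda_k}\rangle = \langle \varrho_* R,\ \Phi_{\lambda_k} - \sigma^* \Phi_{\lambda_k}\rangle$, change variables by $A_{\lambda_k}$ to reduce to $\langle (A_{\lambda_k})_* \varrho_* R,\ \Phi - (A_{\lambda_k})_* \sigma^* (A_{\lambda_k})^* \Phi\rangle$, and note that $A_{\lambda_k} \circ \sigma \circ A_{\lambda_k}^{-1}$ is a diffeomorphism that converges to the identity on the support of $\Phi$ at rate $O(|\lambda_k|^{-1})$ in $C^1$ (the quadratic error terms in $\sigma$ get multiplied by $\lambda_k$ and then, when pulling the fiber coordinate back by $\lambda_k^{-1}$, produce an overall factor $|\lambda_k|^{-1}$). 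Therefore $\Phi - \big(A_{\lambda_k} \sigma A_{\lambda_k}^{-1}\big)^* \Phi = O(|\lambda_k|^{-1})$ uniformly as a smooth form supported in a fixed compact set, and since $(A_{\lambda_k})_* \varrho_* R$ has locally uniformly bounded mass (because it converges), pairing gives the bound $C|\lambda_k|^{-1}$.

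Finally, (\ref{ine_taunaga}) together with the convergence $(A_{\lambda_k})_* \varrho_* R \to R_\infty$ immediately yields $\langle (A_{\lambda_k})_* \tau_* R, \Phi\rangle = \langle R, \tau^* \Phi_{\lambda_k}\rangle \to \langle R_\infty, \Phi\rangle$ for every test form $\Phi$, i.e. $(A_{\lambda_k})_* \tau_* R \to R_\infty$.

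The main obstacle I anticipate is making the $O(|\lambda_k|^{-1})$ estimate fully rigorous: one must carefully track how the conjugated diffeomorphism $A_{\lambda_k} \circ \sigma \circ A_{\lambda_k}^{-1}$ and its derivatives behave, verify that the relevant Taylor coefficients of $\sigma$ indeed vanish to the order claimed (this is exactly where the defining properties of admissible maps — identity on $\Delta$, identity normal differential — are used), and control the pullback of $\Phi$ and its $C^1$-norm uniformly on the compact support. The bounded-mass input needed for the pairing is automatic once $(A_{\lambda_k})_* \varrho_* R$ is known to converge, so no separate mass bound argument is required beyond citing that convergence.
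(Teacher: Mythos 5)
Your overall strategy matches the paper's closely: compare $\varrho$ and $\tau$ locally, observe that the conjugated diffeomorphism is $O(|\lambda|^{-1})$-close to the identity, and pair against the current using the uniform mass bound coming from the convergence of $(A_{\lambda_k})_*\varrho_* R$. The deferral of the $\Delta$-conic part to the argument in \cite{dinh-sibony:density} is also exactly what the paper does. However, there is a genuine error in your Taylor expansion of $\sigma = \tau\circ\varrho^{-1}$, and since you flagged "verify that the relevant Taylor coefficients of $\sigma$ indeed vanish to the order claimed" as the main anticipated obstacle, it is worth addressing directly.

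The definition of an admissible map only requires the differential at $\Delta$ to be the identity \emph{in the normal direction}; it does not constrain the cross-derivative $\partial_v\tau_x$ of the base component with respect to the fiber variable. Concretely, at a point $(0,x)\in\Delta$ the differential has the block form
\[
D\tau(0,x) = \begin{pmatrix} I & 0 \\ a(x) & I \end{pmatrix},
\qquad
D\varrho(0,x) = \begin{pmatrix} I & 0 \\ b(x) & I \end{pmatrix},
\]
with $a,b$ arbitrary, so $D\sigma(0,x) = \begin{pmatrix} I & 0 \\ a(x)-b(x) & I \end{pmatrix}$ is \emph{not} the identity in general, and
\[
\sigma(v,x) = \bigl(v + O(|v|^2),\; x + v\,(a(x)-b(x)) + O(|v|^2)\bigr).
\]
That is, the fiber correction is indeed $O(|v|^2)$, but the base correction is only $O(|v|)$. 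This is precisely what the paper's formulas
\[
\tau(y',y^m)= \bigl(y',\, y^m + y' a(y)\bigr)+ O(|y'|^2), \qquad
\varrho(y',y^m)= \bigl(y',\, y^m + y' b(y)\bigr)+ O(|y'|^2)
\]
record, so that $\varrho-\tau = (0,\,y'c(y)) + O(|y'|^2)$ with $c=b-a$ nonzero in general: the linear-in-$y'$ term in the base direction survives.

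Fortunately your final $O(|\lambda|^{-1})$ bound still holds, but your stated mechanism ("the quadratic error terms in $\sigma$ get multiplied by $\lambda_k$\dots") accounts only for the fiber direction. Under conjugation $A_\lambda\,\sigma\,A_\lambda^{-1}$, the fiber coordinate is amplified by $\lambda$, so the $O(|v|^2)$ fiber correction gives $\lambda\cdot O(|\lambda|^{-2}) = O(|\lambda|^{-1})$ as you say; but the base coordinate is left unchanged by $A_\lambda$, and the surviving linear term $v\cdot(a-b)$, evaluated at $|v|\lesssim|\lambda|^{-1}$, \emph{independently} contributes $O(|\lambda|^{-1})$. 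You must therefore treat the two directions separately rather than assume all first-order Taylor coefficients of $\sigma - \mathrm{id}$ vanish. Once this is corrected, the rest of your argument — $C^1$-convergence of the conjugated map to the identity at rate $O(|\lambda|^{-1})$, hence $\Phi - (A_\lambda\sigma A_\lambda^{-1})^*\Phi = O(|\lambda|^{-1})$ on a fixed compact, paired against the mass-bounded family — closes as claimed.
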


\proof 
This proposition is implicitly proved in \cite{dinh-sibony:density}. We give here a proof for the readers' convenience. We emphasize that we don't assume $X$ to be K\"ahler nor the compactness of the intersection of $\supp R$ and $\Delta$.  In order to prove that $R_\infty$ is $\Delta$-conic, we use \cite[Le. 4.7]{dinh-sibony:density} to reduce the problem to the case where $X^m$ is replaced by its blow up along $\Delta$. Then the remainder is done as in the proof of \cite[The. 4.6]{dinh-sibony:density}.

We now prove the second statement. Let $(y', y^m)$ be local coordinates on $X^m$ for which $\Delta= \{y'=0\}.$  Since both $\tau$ and $\varrho$ are admissible there are matrices  $a(y)$ and $b(y)$ with smooth coefficients such that in these coordinates
$$\tau(y',y^m)= \big(y', y^m + y' a(y)\big)+ O(|y'|^2) \text{ and } \varrho(y',y^m)= \big(y', y^m + y' b(y)\big)+ O(|y'|^2).$$
In particular 
\begin{equation} \label{eq:rho-tau}
\left( \varrho - \tau \right) (y',y^m) = \big(0, y' c(y)\big)+ O(|y'|^2),
\end{equation}
where $c(y)=b(y)-a(y)$. Let $K_\lambda$ be the support of $\varrho^* \Phi_\lambda -  \tau^* \Phi_\lambda$. Notice that $|y'| \lesssim |\lambda|^{-1}$ on $K_\lambda$. Using this fact and (\ref{eq:rho-tau}) we can see that $$\varrho^* \Phi_\lambda -  \tau^* \Phi_\lambda = \left( A_\lambda \circ ( \varrho - \tau ) \right)^* \Phi = \frac{1}{|\lambda|} \varrho^* A_\lambda^* \Psi_\lambda $$ where $\Psi_\lambda$ are compactly supported forms whose coefficients are uniformly bounded in $\lambda$. Let $\Omega$ be a positive form with compact support such that $\Psi_\lambda \leq \Omega$ for every $\lambda$. We have then $$
\big|\langle  R, \varrho^* \Phi_\lambda -  \tau^* \Phi_\lambda \rangle \big| =  |\lambda|^{-1} \big|\langle (A_\lambda)_* \varrho_*  R, \Psi_\lambda \rangle \big| \leq |\lambda|^{-1}\big|\langle (A_\lambda)_* \varrho_*  R, \Omega \rangle \big|.$$
Since the limit of $(A_{\lambda_k})_* \varrho _* R$ as $\lambda_k \to \infty$ exists, their masses on compact sets are bounded uniformly in $\lambda_k$. This gives (\ref{ine_taunaga}) and finishes the proof.
\endproof

\begin{proof}[End of the proof of  Theorem \ref{thm:compatible-def-wedge-general}] 
Let $\varrho$ be the coordinates given by (\ref{eq:standard-tau}). Proposition  \ref{pro_voitauthoi} gives $$\lim_{\lambda \to \infty} (A_\lambda)_* \varrho_* (T_1 \otimes \cdots \otimes T_m) = \pi^*(T_1 \wedge \cdots \wedge T_{m-1} \wedge T)$$ and Proposition   \ref{pro_taunga} shows that
$$\lim_{\lambda \to \infty} (A_\lambda)_* \tau_* (T_1 \otimes \cdots \otimes T_m) = \pi^*(T_1 \wedge \cdots \wedge T_{m-1} \wedge T)$$
for any admissible map $\tau$, so by the definition of the Dinh-Sibony product we have $T_1 \wedge \cdots \wedge T_{m-1} \wedge T = T_1 \curlywedge \cdots \curlywedge T_{m-1} \curlywedge T$, concluding the proof.
\end{proof}


\section{Densities and the non-pluripolar product} \label{sec_nonpluri}

Let $X$ be a Kähler manifold and let $u_1,\ldots,u_m$ be p.s.h.\ functions on $X$. Let 
$$    \mathcal O_k := \bigcap_{j=1}^m \{ x \in X : u_j(x) > -k \}.$$

Following \cite{BT_fine_87,BEGZ} we say that the non-pluripolar product of $\ddc u_j$ $j=1,\ldots m$, denoted by $\langle \ddc u_1 \wedge \cdots \wedge \ddc u_m \rangle$ is well defined if the following limit exists
\begin{equation} \label{eq:non-pp-def}
\left \langle \ddc u_1 \wedge \cdots \wedge \ddc u_m \right \rangle = \lim_{k \to \infty} \mathbf 1_{\mathcal O_k} \bigwedge_{j=1}^m \left( \ddc \max \{u_j,-k\} \right),
\end{equation}
where the product on the right hand side is defined as in (\ref{eq:BT-def}). Equivalently,  $\langle \ddc u_1 \wedge \cdots \wedge \ddc u_m \rangle$  is well defined if
\begin{equation} \label{eq:non-pp-def-equiv}
  \sup_{k \in \N} \int_{\mathcal O_k \cap K} \bigwedge_{j=1}^m  \ddc \max \{u_j,-k\} \wedge \omega^{n-m} < +\infty
\end{equation}
for every compact $K \subset X$. Here $\omega$ stands for a fixed Kähler form.

Let now $T_1,\ldots,T_m$ be positive closed $(1,1)-$currents on $X$. Using local potentials and (\ref{eq:non-pp-def}) we can define their non-pluripolar product $\langle T_1 \wedge \cdots  \wedge T_m \rangle$.

If $u_j$ is a local potential of $T_j$, the pole set of $T_j$ $$I_{T_j} = \{x \in X : u_j(x) = -\infty\}$$ is well defined and independent of the chosen potential.

Suppose now that the supports of the $T_j$ have compact intersection. Then, following Section \ref{sec1}, we can define density currents for the collection $(T_1,\ldots,T_m)$, which are currents on the bundle $\pi: N\Delta\to \Delta \subset X^m$.  The following result compare these two notions.

\begin{theorem} \label{th_density_nonpluri}   Let $T_1, \cdots, T_m$ be positive closed $(1,1)-$currents. Assume their supports have compact intersection.  Then  the non-pluripolar product $ \langle T_1 \wedge \cdots \wedge T_m \rangle$ is well-defined on $X$ and  
\begin{align} \label{ine_TjS}
\pi^* \langle T_1 \wedge \cdots \wedge T_m \rangle \le   \mathbf{1}_{\pi^{-1}(X \setminus (\bigcup_{1 \le j \le m} I_{T_j}))} S,
\end{align}
for any density current $S$ associated with $(T_1, \ldots, T_m)$.

Assume furthermore that the sets $\mathcal O_k$ are open in the Euclidean topology. Then inequality \emph{(\ref{ine_TjS})} becomes an equality. 
\end{theorem}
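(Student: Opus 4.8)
The plan is to prove Theorem~\ref{th_density_nonpluri} by reducing to the situation covered by Theorem~\ref{thm:compatible-def-wedge-general} applied to the truncated currents, and then taking a limit. Since the statement is local in $X$ (around the compact set $\supp T_1 \cap \cdots \cap \supp T_m$) we may assume $X$ is a ball in $\C^n$, write $T_j = \ddc u_j$ with $u_j$ p.s.h., and fix the standard admissible coordinate $\varrho$ of \eqref{eq:standard-tau}; by Proposition~\ref{pro_taunga} it suffices to analyse $(A_\lambda)_* \varrho_* \mathbf{T}$, which equals $\ddc \widetilde u_1 \wedge \cdots \wedge \ddc \widetilde u_{m-1} \wedge \pi^* \ddc u_m$ in the notation of Section~\ref{sec2}. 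For a fixed level $k$ set $u_j^{(k)} = \max\{u_j, -k\}$, a locally bounded p.s.h.\ function, so that $(\ddc u_1^{(k)}, \ldots, \ddc u_m^{(k)})$ satisfies Property~$(\star)$ (boundedness is the classical Bedford--Taylor case); Theorem~\ref{thm:compatible-def-wedge-general} then gives
\begin{equation*}
(A_\lambda)_* \varrho_*\bigl(\ddc u_1^{(k)} \otimes \cdots \otimes \ddc u_m^{(k)}\bigr) \;\xrightarrow{\;|\lambda|\to\infty\;}\; \pi^*\bigl(\ddc u_1^{(k)} \wedge \cdots \wedge \ddc u_m^{(k)}\bigr).
\end{equation*}

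The next step is to pass from the truncated currents to the originals. On the open set $\mathcal O_k = \bigcap_j \{u_j > -k\}$ one has $u_j = u_j^{(k)}$, and more precisely on $\mathcal O_{k+1}$ the potentials agree with $u_j^{(k)}$ near that set, so $\mathbf 1_{\mathcal O_{k+1}} \ddc u_1^{(k)} \wedge \cdots \wedge \ddc u_m^{(k)} = \mathbf 1_{\mathcal O_{k+1}} \langle \ddc u_1 \wedge \cdots \wedge \ddc u_m\rangle_{\text{partial}}$; in any case the Bedford--Taylor construction shows the measures $\mathbf 1_{\mathcal O_k}\bigwedge \ddc u_j^{(k)}$ are increasing in $k$ on each fixed relatively compact open set once restricted appropriately, which is exactly the monotonicity underlying \eqref{eq:non-pp-def-equiv}. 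Well-definedness of $\langle T_1\wedge\cdots\wedge T_m\rangle$ will follow from a uniform mass bound: pairing $(A_\lambda)_* \varrho_* \mathbf T$ with a fixed positive test form $\Phi$ of the shape $\Phi_{1,\chi}(y') \wedge \Phi_2(y^m)$ as in Lemma~\ref{le_claim1} and using that $(A_{\lambda_k})_*\varrho_*\mathbf T$ converges (hence has locally bounded mass, by compactness of the support intersection and the cohomological argument recalled in Section~\ref{sec1}) gives, after the Fubini computation of Lemma~\ref{le_claim1} applied to the truncations and letting $\lambda\to\infty$ first, a bound on $\int_{\mathcal O_k \cap K} \bigwedge \ddc u_j^{(k)} \wedge \omega^{n-m}$ independent of $k$; thus \eqref{eq:non-pp-def-equiv} holds and the non-pluripolar product exists.

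For the inequality \eqref{ine_TjS}: let $S$ be any density current, so $S = \lim_k (A_{\lambda_k})_*\varrho_*\mathbf T$ along some sequence $\lambda_k\to\infty$. Write $\mathbf T = \ddc u_1^{(\ell)} \otimes \cdots \otimes \ddc u_m^{(\ell)} + (\text{positive correction supported over }\bigcup_j\{u_j \le -\ell\})$ — more carefully, comparing $\bigwedge \ddc \widetilde u_j \wedge \pi^* T$ with its $\ell$-truncation using the local structure of Bedford--Taylor products, one gets that on $\pi^{-1}(\mathcal O_\ell)$ the two agree, hence $\mathbf 1_{\pi^{-1}(\mathcal O_\ell)} (A_\lambda)_*\varrho_*\mathbf T = \mathbf 1_{\pi^{-1}(\mathcal O_\ell)}(A_\lambda)_*\varrho_*(\bigotimes \ddc u_j^{(\ell)})$ — and then letting $\lambda = \lambda_k \to \infty$ and using the $\ell$-truncated convergence above together with lower semicontinuity of mass under the restriction $\mathbf 1_{\text{open}}$, one obtains $\mathbf 1_{\pi^{-1}(\mathcal O_\ell)} S \ge \pi^*\bigl(\mathbf 1_{\mathcal O_\ell} \langle \ddc u_1\wedge\cdots\wedge\ddc u_m\rangle\bigr)$ (and conversely $\le$ when $\mathcal O_\ell$ is open, since then the restriction operators are continuous from below and the truncated limit \emph{is} the Dinh--Sibony product by Theorem~\ref{thm:compatible-def-wedge-general}). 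Letting $\ell\to\infty$, noting $\bigcup_\ell \mathcal O_\ell = X\setminus\bigcup_j I_{T_j}$, and using that $\langle T_1\wedge\cdots\wedge T_m\rangle$ puts no mass on the pluripolar set $\bigcup_j I_{T_j}$, yields \eqref{ine_TjS}, with equality under the extra openness hypothesis.

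The main obstacle I anticipate is the comparison ``$\mathbf 1_{\pi^{-1}(\mathcal O_\ell)}(A_\lambda)_*\varrho_*\mathbf T = \mathbf 1_{\pi^{-1}(\mathcal O_\ell)}(A_\lambda)_*\varrho_*(\bigotimes \ddc u_j^{(\ell)})$'' and the interchange of the two limits $\lambda\to\infty$ and $\ell\to\infty$: the sets $\mathcal O_\ell$ are not open in general, so $\mathbf 1_{\mathcal O_\ell}$ acting on currents is only lower/upper semicontinuous depending on the side, and one must be careful that the truncation $u_j^{(\ell)}$ only locally coincides with a potential of $T_j$ on $\mathcal O_\ell$, not globally. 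Handling this requires choosing, around each point of $\mathcal O_\ell$, a small ball on which all $u_j > -\ell$, gluing via the locality of the Dinh--Sibony construction (Proposition~\ref{pro_taunga} and the fact that density currents restrict well to $\pi^{-1}$ of open sets), and invoking the quasi-continuity of Bedford--Taylor products to pass $\mathbf 1_{\mathcal O_\ell}$ through the weak limit; the openness hypothesis in the last sentence is precisely what removes the semicontinuity gap and upgrades the inequality to an equality.
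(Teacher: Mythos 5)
Your overall strategy (truncate $u_j$ at level $-k$, apply Theorem \ref{thm:compatible-def-wedge-general} to the bounded-potential currents, restrict to the sets where the potentials are $>-k$, and pass to the limits in $\lambda$ and $k$) is the same as the paper's, but the step you yourself flag as the main obstacle is exactly where the proof lives, and your proposed bridge is not correct as stated. First, the restriction must be performed on $X^m$ over the product set $\Omega_k=\bigcap_j\{u_j(x^j)>-k\}$ (conditions on each factor separately), not over $\pi^{-1}(\mathcal O_k)$, which only constrains the base point $x^m$: for fixed $\lambda$ the preimage under $A_\lambda\circ\varrho$ of $\pi^{-1}(\mathcal O_k)$ contains points with $x^j\neq x^m$ where $u_j(x^j)\le -k$, so the identity $\mathbf 1_{\pi^{-1}(\mathcal O_\ell)}(A_\lambda)_*\varrho_*\mathbf T=\mathbf 1_{\pi^{-1}(\mathcal O_\ell)}(A_\lambda)_*\varrho_*(\bigotimes\ddc u_j^{(\ell)})$ fails. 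Second, even with the correct set one does not get an equality between the restriction of $\mathbf T$ and of a truncated tensor product; what the paper proves (Lemma \ref{lemma:3-ineq}) is the chain $\mathbf 1_{\Omega_k}T(k)=\mathbf 1_{\Omega_k}T(\ell)\le\mathbf 1_{\overline\Omega_k}T(\ell)$ for $\ell\ge k$, using the plurifine locality of Bedford--Taylor products (\cite[Cor. 4.3]{BT_fine_87}), followed by a $\limsup$ in $\ell$, which only yields $\mathbf 1_{\Omega_k}T(k)\le\mathbf 1_{\overline\Omega_k}T$; the appearance of the closure $\overline\Omega_k$ is unavoidable and is precisely why the general statement is an inequality rather than an equality.

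Third, since $\Omega_k$ is not open, the lower bound $\liminf_{\lambda}(A_\lambda)_*\tau_*(\mathbf 1_{\Omega_k}T(k))\ge \mathbf 1_{\pi^{-1}(\Omega_k\cap\Delta)}\pi^*\widetilde T(k)$ cannot be obtained from semicontinuity of restrictions alone; the paper proves it by quasi-continuity: one replaces $u_j$ by continuous $f_j$ agreeing with $u_j$ outside a set of capacity $<\epsilon$, checks that the error $\|(A_\lambda)_*\tau_*[\mathbf 1_{\Omega_k}T(k)-\mathbf 1_{\Omega_k'}T(k)]\|=O(\epsilon)$ \emph{uniformly in} $\lambda$ (using that $A_\lambda\circ\tau$ is a biholomorphism together with Chern--Levine--Nirenberg-type bounds, which is where boundedness of the truncated potentials is essential), works with the genuinely open set $\Omega_k'$, and controls the discrepancy on the diagonal by the capacity bound (\ref{ine_VDelta}). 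You mention quasi-continuity only as something that would be needed, without supplying this uniform-in-$\lambda$ capacity estimate. Finally, the well-definedness of $\langle T_1\wedge\cdots\wedge T_m\rangle$, i.e.\ the uniform bound (\ref{eq:non-pp-def-equiv}), does not follow from a Fubini computation on $\mathbf T$ paired with test forms as you suggest (that pairing involves the untruncated potentials and gives no comparison with the truncated masses); in the paper it follows from the chain of inequalities in Lemma \ref{lemma:3-ineq} combined with the Dinh--Sibony mass bound $\|S\|\le C\|T\|_K$ of (\ref{ine_mass_density}). So the proposal reproduces the skeleton of the argument but leaves the genuinely hard steps (plurifine locality comparison with the closure, and the uniform capacity/quasi-continuity argument) unproven, and the specific identity it relies on is false.
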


\begin{proof}
Let $S$ be a density current associated with $(T_1,\ldots,T_m)$ and denote $$T:= T_1 \otimes \cdots \otimes T_m.$$
Let $K$ denote the intersection of $\supp T$ with $\Delta$.  Since we are assuming that the the $T_j$'s have compact intersection, $K$ is also compact. By \cite[Th. 4.6]{dinh-sibony:density}, $S$ is supported by $\pi^{-1}(K)$ and there is a constant $C$ for which
\begin{align} \label{ine_mass_density}
\|S\| \le C \|T\|_K,
\end{align}

Notice that the result we are aiming to prove is local, so from now we assume that $X$ is an open subset in $\C^n$. Notice that the supports of the $T_j$ no longer  have compact intersection in $X$, but this won't affect the proof.


 Write $T_j = \ddc u_j$ where the $u_j$ are p.s.h.\ in $X$. For  $k \in \N$ and $1 \le j \le m$ define $u_j^k:= \max\{u_j, -k\}$ and  $T_j(k):= \ddc u^k_j$.  Set 
$$ \quad T(k):= T_1(k) \otimes \cdots \otimes T_m(k) \text{ and } \,\, \widetilde{T}(k):= T_1(k) \wedge \cdots \wedge T_m(k).$$
Note that $\widetilde{T}(k)$ is a positive closed current on $X,$ whereas $T$ and $T(k)$ are positive closed currents on $X^m$ and $T(k)$ converges to $T$ as $k \rightarrow \infty.$  

Fix $\tau$ an admissible coordinate system. Since $S$ is a density current, there exists a sequence $\lambda_k \nearrow \infty$ such that $(A_{\lambda_k})_* \tau_* T$ converge to $S$. To simplify the notation, we can assume that   
\begin{align} \label{eq:AlambdaS}
\lim_{\lambda \to \infty}(A_{\lambda})_* \tau_* T = S.
\end{align}
This assumption does not affect the proof.

From the above convergence we get that for any open set $U \subset X^m$ and any closed set $K \subset X^m$ we have
\begin{equation} \label{eq:open-compact}
\liminf_{\lambda \to \infty} \mathbf{1}_U (A_{\lambda})_* \tau_* T \geq \mathbf{1}_U S \,\,\, \text{ and } \,\,\,  \limsup_{\lambda \to \infty} \mathbf{1}_K (A_{\lambda})_* \tau_* T \leq \mathbf{1}_K S.
\end{equation}

By Theorem \ref{thm:compatible-def-wedge-general} we have, for any fixed $k$, 
\begin{align} \label{eq_Alambda_k}
\lim_{\lambda \rightarrow \infty} (A_{\lambda})_*\big( \tau_* T(k)\big)= \pi_m^* \widetilde{T}(k).
\end{align}

For $k \in \N \cup \{\infty\}$, put 
$$\Omega_k:= \bigcap_{j=1}^m \{(x^1, \cdots, x^m) \in X^m:  u_j(x^j) > -k\}.$$


\begin{lemma} \label{lemma:3-ineq}
\begin{align} \label{limit_Ok}
\mathbf{1}_{\pi^{-1}(\Omega_k \cap \Delta)} \pi^* \big(\widetilde{T}(k) \big) &\le  \liminf_{\lambda \rightarrow \infty} (A_{\lambda})_*\tau_*\big(  \mathbf{1}_{\Omega_k} T(k)\big)\\
\nonumber
&\le \liminf_{\lambda \rightarrow \infty} (A_{\lambda})_* \tau_*\big(  \mathbf{1}_{\overline \Omega_k} T\big) \le   \mathbf{1}_{\pi^{-1}(\overline \Omega_k \cap \Delta)} S.
\end{align}
for every $k \in \N$.
\end{lemma}

Assume for now the Lemma is true. Its proof will be given below. Then (\ref{limit_Ok}) together with (\ref{ine_mass_density}) implies (\ref{eq:non-pp-def-equiv}), showing that $\langle T_1,\ldots T_m \rangle$ is well defined.  By letting $k \to \infty$ in  (\ref{limit_Ok}), using the definition of non-pluripolar product and noticing that $ \bigcup_{k \geq 1} \overline \Omega_k \cap \Delta =  X \setminus \bigcup_{1 \le j \le m} I_{T_j}$ we get inequality (\ref{ine_TjS}).

Suppose now that the $\mathcal O_k$ are open. Then  the $ \Omega_k$ are also open. Using the same type of argument as in the proof of Lemma \ref{lemma:3-ineq} one gets 
\begin{align} \label{ine_limsupATk}
\limsup_{\lambda \rightarrow \infty} (A_{\lambda})_* \tau_*\big(  \mathbf{1}_{\Omega_{k}}T\big)= \limsup_{\lambda \rightarrow \infty} (A_{\lambda})_* \tau_* \big(  \mathbf{1}_{\Omega_k}T(k)\big) \le \mathbf{1}_{\pi^{-1}(\overline{\Omega}_k \cap \Delta)} \pi^* \big(\tilde{T}(k) \big).
\end{align}

Since $\Omega_k$ is open, (\ref{ine_limcotynua}) holds. Using (\ref{eq:open-compact}) this gives
$$\limsup_{\lambda \rightarrow \infty} (A_{\lambda})_* \tau_*\big( \mathbf{1}_{\Omega_{k}} T\big) \ge \mathbf{1}_{\pi^{-1}(\Omega_k \cap \Delta)} S,$$
which together with (\ref{ine_limsupATk}) yields
$$\mathbf{1}_{\pi^{-1}(\overline{\Omega}_{k} \cap \Delta)} \pi^* \big(\tilde{T}(k) \big) \ge \mathbf{1}_{\pi^{-1}(\Omega_k \cap \Delta)} S.$$

Letting $k \to \infty$ gives
\begin{align} \label{ine_Oinfty2}
\langle T_1, \cdots, T_m \rangle \ge \mathbf{1}_{\pi^{-1}( X \setminus \bigcup_{1 \le j \le m} I_{T_j})} S,
\end{align}
giving the equality in (\ref{ine_TjS}).

\begin{proof}[Proof of Lemma \ref{lemma:3-ineq}. ]
Let us first prove the first inequality in (\ref{limit_Ok}).  If each  $u_j$ is continuous, then $\Omega_k$ is open and 
\begin{align} \label{ine_limcotynua} 
 \lim_{\lambda \rightarrow \infty} (A_{\lambda})_* \mathbf{1}_{\tau(\Omega_k)}= \mathbf{1}_{\pi^{-1}(\Omega_k \cap \Delta)}.
 \end{align}
Then the first inequality follows from (\ref{eq:open-compact}).

Consider now the general case. Fix a small positive constant $\epsilon$. Consider $u_j$ as a psh function on $X^m.$  By the quasi-continuity of psh functions (see \cite{klimek}), there exists a closed subset $V$ of $X^m$ such that $u_j$ is continuous on $V$ and 
 \begin{equation} \label{ine_cap}
\capK(X^m \backslash V,X^m) <\epsilon.
 \end{equation} 
As the $u_j$ are also psh on $\Delta$ we can choose $V$ such that
\begin{equation}
 \label{ine_VDelta}
\capK(\Delta \backslash (V \cap \Delta),\Delta) \le \epsilon
\end{equation}

Since $\tau$ and $A_\lambda$ are holomorphic isomorphisms we also have 
\begin{align} \label{ine_cap2}
\capK\big(A_\lambda \circ \tau(X^m \backslash V), N \Delta \big) < \epsilon. 
 \end{align}
 
Let $f_j$, $j=1,\ldots,m$ be continuous functions on $X^m$ such that $f_j= u_j$ on $V$. In particular we have that  
$$\{f_j > -k\} \triangle \{ u_j > -k\} \subset X^m \backslash V.$$
The last fact combined with (\ref{ine_cap2})  implies that
\begin{align} \label{sosanhfjTk}
\| (A_{\lambda})_*  \tau_*\big[ \mathbf{1}_{\Omega_k}T(k) -  \mathbf{1}_{\Omega_k'}  T(k) \big] \| = O(\epsilon)
\end{align}
independently of $\lambda$, where 
$$\Omega_k':= \bigcap_{j=1}^m\{f_j >-k\}.$$

Since  $\Omega_k'$ is open we have
\begin{align} \label{limit_O'}
\lim_{\lambda \rightarrow \infty} (A_{\lambda})_* \mathbf{1}_{\Omega_k'} = \mathbf{1}_{\pi^{-1}(\Omega_k' \cap \Delta)} 
\end{align}
which together with (\ref{eq:open-compact}) implies that
\begin{align} \label{conve_fj}
\liminf_{\lambda \rightarrow \infty} (A_{\lambda})_* \tau_* \big(  \mathbf{1}_{\Omega_k'} T(k)\big) \ge \mathbf{1}_{\pi^{-1}(\Omega_k' \cap \Delta)} \pi^* \big(\tilde{T}(k) \big).
\end{align}

Consider now the difference
$$ \mathbf{1}_{\pi^{-1}(\Omega_k' \cap \Delta)} \pi^* \big(\tilde{T}(k) \big)-  \mathbf{1}_{\pi^{-1}(\Omega_k \cap \Delta)} \pi^* \big(\tilde{T}(k) \big).$$ 
Its mass is at most  two times that of  $ \mathbf{1}_{\pi^{-1}\big(\Delta \backslash (V \cap \Delta)\big)} \pi^* \big(\tilde{T}(k) \big)$ which is 
$$\lesssim \capK \big(\Delta \backslash (V  \cap \Delta), \Delta \big) \le \epsilon $$
by (\ref{ine_VDelta}).  Combining this with (\ref{conve_fj}) and (\ref{sosanhfjTk})and making $\epsilon \to 0$ gives the first inequality of (\ref{limit_Ok}).

For the second inequality we observe that $\mathbf{1}_{\Omega_k} T(k) = \mathbf{1}_{\Omega_k} T(\ell) \leq \mathbf{1}_{\overline \Omega_k} T(\ell) $ for $\ell \geq k$. This follows from \cite[Corollary 4.3]{BT_fine_87}. Taking the $\limsup$ with respect to $\ell$ and remembering that $T(\ell) \stackrel{\ell \to \infty}{\longrightarrow} T$ weakly  gives $\mathbf{1}_{\Omega_k} T(k) \leq \mathbf{1}_{\overline \Omega_k} T $.

To prove the third inequality of (\ref{limit_Ok}) it suffices to observe that
\begin{align} \label{ine_chantren}
\limsup_{\lambda \rightarrow \infty} (A_{\lambda})_*\tau_*  \mathbf{1}_{\overline \Omega_k} \le \mathbf{1}_{\pi^{-1}(\overline{\Omega}_k \cap \Delta)}.
\end{align} 
and use (\ref{eq:AlambdaS}) and (\ref{eq:open-compact}). We have thus proven (\ref{limit_Ok}).
\end{proof}

The proof of Theorem \ref{th_density_nonpluri} is now complete.
\end{proof}


Recall that a current $T$ is said to have analytic singularities if locally $T= \ddc u$ where
\begin{equation} \label{eq:an-sing}
u = \frac{c}{2} \log \sum_{j=1}^n |f_j|^2 + v,
\end{equation}

where $c > 0$, $v$ is smooth and $f_j$ are local holomorphic functions. In that case, the pole set of $T$ is an analytic set $Z$ called the singular locus of $T$ and $T$ is smooth outside $Z$. Notice that in this case $\mathcal O_k = \{u > -k\}$ is open for every $k$, so we may then apply the second part  of Theorem \ref{th_density_nonpluri}.

\begin{corollary} \label{cor:non-pp-analytic-sing}
Let $T_1, \cdots, T_m$ be positive closed $(1,1)-$currents with analytic singularities on a complex manifold $X$ whose supports have compact intersection. Denote by $Z_j$ the singular locus of $T_j$ and let $Z = \bigcup_{j=1}^m Z_j$. Then  the non-pluripolar product $ \langle T_1 \wedge \cdots \wedge T_m \rangle$ is well-defined $X$ and  
\begin{equation*}
\pi^* \langle T_1 \wedge \cdots \wedge T_m \rangle =   \mathbf{1}_{\pi^{-1}(X \setminus Z)} S,
\end{equation*}
for any density current $S$ associated with $(T_1, \ldots, T_m)$.
\end{corollary}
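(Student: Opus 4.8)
The plan is to deduce Corollary \ref{cor:non-pp-analytic-sing} directly from Theorem \ref{th_density_nonpluri}. The hypotheses of the corollary are a special case of those of the theorem: each $T_j$ has analytic singularities, which in particular means $T_j = \ddc u_j$ locally with $u_j$ as in \eqref{eq:an-sing}, and the supports of the $T_j$ have compact intersection. So the theorem applies verbatim and gives that $\langle T_1 \wedge \cdots \wedge T_m\rangle$ is well-defined on $X$, together with the inequality \eqref{ine_TjS}. It remains only to upgrade \eqref{ine_TjS} to an equality and to identify the exceptional set $\bigcup_{1\le j\le m} I_{T_j}$ with $Z = \bigcup_{j=1}^m Z_j$.

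First I would observe that for a current $T_j$ with analytic singularities given by \eqref{eq:an-sing}, the pole set $I_{T_j}=\{u_j = -\infty\}$ coincides with the common zero set $\{f_1 = \cdots = f_n = 0\}$, which is precisely the singular locus $Z_j$ (the summand $v$ is smooth, hence finite, so it does not affect where $u_j = -\infty$). Thus $\bigcup_{1\le j\le m} I_{T_j} = \bigcup_{j=1}^m Z_j = Z$, and the right-hand side of \eqref{ine_TjS} is $\mathbf{1}_{\pi^{-1}(X\setminus Z)} S$.

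Next I would check the openness hypothesis needed for the second part of Theorem \ref{th_density_nonpluri}. For $T_j$ with analytic singularities, $u_j$ is (locally) the sum of a continuous function $\frac{c}{2}\log\sum |f_j|^2$ valued in $[-\infty,+\infty)$ and a smooth function $v$; in particular $u_j$ is upper semicontinuous and continuous as a map into $[-\infty,\infty)$, so each set $\{u_j > -k\}$ is open in the Euclidean topology. As already remarked in the excerpt just before the corollary, this means the sets $\mathcal O_k = \bigcap_{j=1}^m \{u_j > -k\}$ are open, so the second part of Theorem \ref{th_density_nonpluri} applies and \eqref{ine_TjS} becomes an equality:
\[
\pi^* \langle T_1 \wedge \cdots \wedge T_m \rangle = \mathbf{1}_{\pi^{-1}(X \setminus \bigcup_{1\le j\le m} I_{T_j})} S = \mathbf{1}_{\pi^{-1}(X\setminus Z)} S.
\]

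There is essentially no main obstacle here; the corollary is a formal consequence of the theorem. The only points requiring (minor) care are the two bookkeeping identifications above — that the pole set of an analytic-singularity current is its singular locus, and that the defining potentials are continuous into $[-\infty,\infty)$ so that the level sets $\mathcal O_k$ are open — both of which are immediate from the local form \eqref{eq:an-sing}. I would phrase the proof in two short sentences invoking Theorem \ref{th_density_nonpluri} and these two observations.
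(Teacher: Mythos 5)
Your proposal is correct and follows exactly the route the paper takes: the corollary is deduced from Theorem \ref{th_density_nonpluri} by noting that for currents with analytic singularities the pole set $I_{T_j}$ coincides with the singular locus $Z_j$ and that the local potentials are continuous with values in $[-\infty,\infty)$, so the sets $\mathcal O_k$ are open and the equality case of the theorem applies. The two bookkeeping observations you single out are precisely the ones the paper records in the paragraph preceding the corollary.
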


\section{Non-proper intersection}  \label{sec_nonproper}

In this section we  first study some particular cases where the density product is not well-defined but a description of the density current can be explicitly given. We also give a comparison of density currents under  blowups. As a consequence, Theorem \ref{th_AWdensity} will follow immediately from these results. 

\subsection{Currents with divisorial singularities}
Let $X$ be an arbitrary complex manifold of dimension $n$.  A positive closed $(1,1)-$current $T$ on $X$ is said to have \textit{divisorial singularities} if locally we can write
\begin{equation} \label{eq:div-sing}
T = \ddc \log |f| + \ddc v = [f=0] + \ddc v, 
\end{equation}
where  $f$ is a holomorphic function and $v$ is continuous. This is a special case of a current with analytic singularities. Notice that $v$ is psh outside $Z = \{f=0\}$ and so, up to modifying its value on  $Z$, we may assume that $v$ is psh.

Following \cite{dinh-sibony:density} we say that a  a positive closed $(m,m)-$current $S$ in $\pi: \overline{N \Delta} \to \Delta$ with $m \leq n$  is of \textit{maximal $h-$dimension} if $S \wedge \pi^* \Phi \neq 0$ for any smooth positive $(n,n)$-form $\Phi$ on $\Delta$.

\begin{theorem} \label{thm:densityTm}
Let $T$ be as in \emph{(\ref{eq:div-sing})} and suppose that $Z = \{f=0\}$ is smooth. Then for $m=1,\ldots,n$ the collection $(T, \ldots, T)$ of $m$ copies of $T$ admits a unique density current which is given locally by
\begin{equation} \label{eq:densityTm}
T^{\otimes m}_\infty = \pi^* \left((\ddc v)^m + m \, (\ddc v)^{m-1} \wedge  [f=0] \right) + \sum_{k=2}^m  \ell^{k-1} \cdot \pi^*( \,[f=0] \wedge (\ddc v)^{m-k}) \wedge R_k,
\end{equation}
where $\ell$ is a positive integer and $R_k$ is $(k-1,k-1)-$current is of maximal $h-$dimension which is positive, closed and whose restriction to each fiber of $\overline{N\Delta}$ is cohomologous to $\binom{m}{k}$ times the class of a linear subspace on that fiber.
\end{theorem}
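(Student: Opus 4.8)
The plan is to reduce the computation to a single blowup and then work fiberwise. First I would localize, writing $T = [f=0] + \ddc v$ with $Z = \{f=0\}$ smooth, say $Z = \{z_1 = 0\}$ in suitable coordinates $z = (z_1, z') \in \C \times \C^{n-1}$, and $v$ continuous psh. Since $Z$ is smooth, the natural thing is to pass to the product $X^m$, let $\varrho$ be the standard admissible coordinates of \eqref{eq:standard-tau}, and observe that $\varrho_*(T^{\otimes m})$ splits: the divisorial parts $[f_j = 0]$ and the smooth-potential parts $\ddc v_j$ can be treated separately by multilinearity of the classical wedge product, which is legitimate here because each $v_j$ is continuous (so Property $(\star)$ holds for any sub-collection involving only the $\ddc v$'s by Bedford--Taylor theory). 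Expanding $T^{\otimes m} = (\, [f=0] + \ddc v\,)^{\otimes m}$ and grouping terms by how many factors are $[f=0]$, the terms with zero or one copy of $[f=0]$ have locally bounded (continuous) potentials and by Theorem \ref{thm:compatible-def-wedge-general} contribute exactly $\pi^*\big((\ddc v)^m + m(\ddc v)^{m-1}\wedge[f=0]\big)$ to the density. The genuinely singular contributions are the terms with $k \ge 2$ copies of $[f=0]$; there $\binom{m}{k}$ such terms appear, each of the form (a permutation of) $[f=0]^{\wedge k}\wedge(\ddc v)^{\wedge(m-k)}$ pushed into $X^m$, and the diagonal self-intersection of $k$ copies of a smooth hypersurface is what produces the nontrivial conic current $R_k$.

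The core step is therefore to compute, for each $k \ge 2$, the density current of the $m$-tuple obtained from $k$ copies of $[Z]$ (with $Z$ smooth) wedged against $m-k$ smooth factors, i.e. to understand $(A_\lambda)_* \varrho_*\big([Z]^{\otimes k}\otimes(\ddc v)^{\otimes(m-k)}\big)$. Because $Z=\{z_1=0\}$ is linear in our coordinates, $[Z]^{\otimes k}$ sits in the subspace $\{z_1^{(1)} = \cdots = z_1^{(k)} = 0\}$, and after the change of coordinates $\varrho$ and the dilation $A_\lambda$ this is a conic current on $N\Delta \cong (\C^n)^{m-1}\times X$ supported over $Z$ in the base. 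I would identify it as $\pi^*\big([f=0]\wedge(\ddc v)^{m-k}\big)\wedge R_k$ where $R_k$ lives in the $(\C^n)^{m-1}$-fiber directions: it is the $(A_\lambda)$-limit of the product of $k-1$ independent copies of the current of integration along a linear $\C^{n-1}\subset\C^n$ (one copy is absorbed into the base factor $[f=0]$, leaving $k-1$ in the fiber). This makes $R_k$ positive, closed, $(k-1,k-1)$-dimensional, conic, and of maximal $h$-dimension (its base factor already carries top degree along $Z$); its fiberwise cohomology class is the class of the self-intersection of $k$ copies of a hyperplane, i.e. $\binom{m}{k}$ (counting the permutations) times the class of a codimension-$(k-1)$ linear subspace of the projectivized fiber $\pr^{n-1}$. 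The integer $\ell$ and the precise cohomological normalization come from keeping track of the multiplicities in the blowup of $X^m$ along $\Delta$ restricted over $Z$; I expect $\ell$ to be $1$ in these linearized coordinates but to state it as ``a positive integer'' allows for the change-of-frame factors that a bi-Lipschitz admissible $\tau$ might introduce — uniqueness of the density then follows from Proposition \ref{pro_taunga}.

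Concretely the key steps, in order, are: (1) localize and reduce to $Z$ linear; (2) expand $T^{\otimes m}$ binomially and use multilinearity plus Theorem \ref{thm:compatible-def-wedge-general} to dispose of the $k\le 1$ terms, getting the $\pi^*\big((\ddc v)^m + m(\ddc v)^{m-1}\wedge[f=0]\big)$ piece; (3) for each $k\ge 2$, compute the density of $k$ copies of $[Z]$ directly by pushing forward under $A_\lambda\circ\varrho$, using that $\varrho_*[Z]^{\otimes k}$ is again (up to the base direction) a product of linear currents in the fiber coordinates, and obtain the factorization $\pi^*([f=0]\wedge(\ddc v)^{m-k})\wedge R_k$; (4) verify the stated properties of $R_k$ — positivity and closedness are inherited, maximal $h$-dimension follows from the base factor being of top degree along $Z$, and the fiberwise class is read off from intersection theory in $\pr^{n-1}$; (5) invoke Proposition \ref{pro_taunga} to conclude the density is independent of the admissible map, hence unique, and assemble \eqref{eq:densityTm}. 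The main obstacle is step (3): making rigorous the claim that the scaled push-forward of $[Z]^{\otimes k}$ converges and identifying its limit as the announced conic current with the correct fiberwise cohomology class — in particular controlling the non-holomorphic error terms $O(|y'|^2)$ in the admissible map and checking that the limit current genuinely has maximal $h$-dimension rather than degenerating, which is where the smoothness of $Z$ is essential.
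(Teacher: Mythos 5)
Your overall strategy coincides with the paper's: localize, expand $T^{\otimes m}=([f=0]+\ddc v)^{\otimes m}$, handle the terms with at most one copy of $[f=0]$ via Theorem~\ref{thm:compatible-def-wedge-general}, compute the $k\ge 2$ terms directly in the linear admissible coordinates $\varrho$, and then invoke Proposition~\ref{pro_taunga}. However there are concrete gaps in the execution.

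First and most seriously, your interpretation of the integer $\ell$ is wrong. You suggest $\ell$ absorbs ``change-of-frame factors that a bi-Lipschitz admissible $\tau$ might introduce,'' but this is precisely what Proposition~\ref{pro_taunga} forbids: the density limit is the same for every admissible map, so no admissible-map-dependent factor can appear in the answer (and indeed you invoke that proposition yourself to deduce uniqueness). In the paper, $\ell$ is the multiplicity of the divisor: if $[f=0]$ is non-reduced one writes $[f=0]=\ell\cdot[g=0]$ with $[g=0]$ reduced, sets $T=\ell\bigl([g=0]+\ddc(\ell^{-1}v)\bigr)$, applies the reduced case to $g$, and then tracks the $\ell^{m}$ factor through the formula to produce the coefficients $\ell^{k-1}$. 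Your proposal never separates the reduced and non-reduced cases, so the $\ell^{k-1}$ coefficients are unaccounted for.

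Second, the identification of the limit in your step (3) is the heart of the matter and is left as ``the main obstacle'' rather than resolved. The paper's mechanism is specific: after pushing forward by $A_\lambda\circ\varrho$, one of the $k$ copies of $[f=0]$ becomes $[\widetilde Z]=\{f(y^m)=0\}$, and one restricts the potentials of the remaining $k-1$ factors to $\widetilde Z$ and Taylor-expands $f$ there, getting $\ddc\log|Df(y^m)\cdot y^{j}+O(\lambda^{-1})|$. Because $Z$ is smooth and $[f=0]$ reduced, $Df$ is nowhere zero on $Z$, so the limiting intersection with $[\widetilde Z]$ is proper, and one concludes by the continuity of proper intersections (Lemma~\ref{lemma:continuity-proper-int}). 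Without this argument there is no justification for the convergence or for the fact that the limit is the stated linear current $U_k$. Relatedly, your reason for maximal $h$-dimension (``the base factor already carries top degree along $Z$'') conflates the base factor $\pi^*([f=0]\wedge(\ddc v)^{m-k})$ with $R_k$: maximal $h$-dimension is a property of the fiber current $R_k$ itself and follows from $U_k$ being a proper intersection of $k-1$ linear hypersurfaces in the $n(m-1)$-dimensional fibers (not $\pr^{n-1}$ as you wrote).
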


\begin{proof} We may assume that $X$ is a domain in $\C^n$ and use the the admissible map  $\varrho$  as in  (\ref{eq:standard-tau}). By Proposition \ref{pro_taunga}, we only need to prove that $\lim_{\lambda \to \infty} (A_\lambda)_* \varrho_* T^{\otimes m}$ equals the right-hand side of (\ref{eq:densityTm}).  Suppose at first that the divisor $[f=0]$ is reduced. The non reduced case is treated in the end of the proof. Since $Z$ is smooth by hypothesis, this implies that $Df$ is non vanishing on $Z$.

Let $T_1 =  [f=0]$ and $T_2 = \ddc v$, so that $T = T_1 + T_2$.  Observe that the products $T_1 \wedge T_2^{m-1}$ and $T_2^m$ are classically defined and no other product of degree $m$ involving $T_1$ and $T_2$ is well defined.

From Theorem \ref{thm:compatible-def-wedge-general}, the term $T_2^{\otimes m}$  in the tensor product $T^{\otimes m} = (T_1+T_2)^{\otimes m}$ has $\pi^*(\ddc v)^m$ as a unique density current and the $m$ terms containing one copy of $T_1$ and $m-1$ copies of $T_2$ have $\pi^* \left( (\ddc v)^{m-1} \wedge  [f=0] \right)$ as a unique density current. This gives the first term on the right hand side of (\ref{eq:densityTm}).

Consider now a general term of $T^{\otimes m}$ containing $k$ copies of $T_1$ and $m-k$ copies of $T_2$. Up to a permutation of the factors, we may assume that this terms is $T_2^{\otimes m-k} \otimes T_1^{\otimes k}$. Put 
$$S_\lambda := (A_\lambda)_* \varrho_* (T_2^{\otimes m-k} \otimes T_1^{\otimes k}).$$
We have
\begin{equation*}
\begin{split}
S_\lambda &=  \ddc v(\lambda^{-1} y^1 + y^m) \wedge \cdots \wedge\ddc v(\lambda^{-1} y^{m-k} + y^m) \wedge \\
& ~~  \wedge \ddc \log |f(\lambda^{-1} y^{m-k+1} + y^m)| \wedge \cdots \wedge \ddc \log |f(\lambda^{-1} y^{m-1} + y^m)| \wedge \ddc \log |f( y^m)|,
\end{split}
\end{equation*}
Let 
$$\widetilde Z = \{f(y^m)=0\}, \quad S'_\lambda = \ddc v(\lambda^{-1} y^1 + y^m) \wedge \cdots \wedge\ddc v(\lambda^{-1} y^{m-k} + y^m) $$
 and   
$$ S''_\lambda= \ddc \log |f(\lambda^{-1} y^{m-k+1} + y^m)| \wedge \cdots \wedge  \ddc \log |f(\lambda^{-1} y^{m-1} + y^m)|,$$
so that $S_\lambda =  S'_\lambda \wedge S''_\lambda \wedge [\widetilde Z ]$.

We can rewrite $$S''_\lambda = \ddc \log |\lambda f(\lambda^{-1} y^{m-k+1} + y^m)| \wedge \cdots \wedge  \ddc \log | \lambda f(\lambda^{-1} y^{m-1} + y^m)|.$$
Using the Taylor expansion of $f$ and restricting the potentials of $S''_\lambda$ to $\widetilde Z$ we have that $S''_\lambda \wedge [\widetilde Z] = U_\lambda \wedge [\widetilde Z]$ where
\begin{equation*}
U_\lambda = \ddc \log |Df(y^m) \cdot y^{m-k+1} + O(\lambda^{-1})| \wedge \ldots \wedge  \ddc \log |Df(y^m) \cdot y^{m-1} + O(\lambda^{-1})| 
\end{equation*}

Since $Df$ is non vanishing on $Z$, the intersection $$\{Df(y^m) \cdot y^{m-k-1}= 0\} \cap \cdots \cap \{Df(y^m) \cdot y^{m-1}= 0\} \cap \widetilde Z$$ is proper and by continuity the intersection defining $U_\lambda \wedge [\widetilde Z]$ is also proper for $\lambda$ large, so by  Lemma \ref{lemma:continuity-proper-int} below we have
$$\lim_{\lambda \to \infty} U_\lambda \wedge [\widetilde Z]  = U_k \wedge [\widetilde Z],$$
where
\begin{equation} \label{eq:Uk}
U_k := \ddc \log |Df(y^m) \cdot y^{m-k+1}| \wedge \ldots \wedge  \ddc \log |Df(y^m) \cdot y^{m-1}|
\end{equation}

Since $v$ is continuous we have that $v(\lambda^{-1}y^j + y^m) \ \to v(y^m) = (v \circ \pi)(y^m)$ uniformly on compact sets, hence 
\begin{equation*}
\lim _{\lambda \to \infty} S_\lambda =  \lim _{\lambda \to \infty} \pi^*(\ddc v)^{m-k} \wedge U_\lambda \wedge [\widetilde Z] =  \pi^* \left((\ddc v)^{m-k} \wedge [f=0]\right) \wedge U_k.
\end{equation*}

Gathering all $\binom{m}{k}$ terms containing $m$ copies of $T_1$ and $m-k$ copies of $T_2$ we get a density current of the form $\pi^* \left((\ddc v)^{m-k} \wedge [f=0]\right) \wedge R_k,$ where $R_k$ is a sum of $\binom{m}{k}$ currents of the type (\ref{eq:Uk}) up to a permutation of coordinates. This gives the second term on the right hand side of (\ref{eq:densityTm}). Here $\ell = 1$. Notice that $U_k$ is of maximal $h-$dimension and its restriction to a fiber of $\overline{N\Delta}$ is a current of integration along a linear subspace. Therefore $R_k$ is of maximal $h-$dimension and its restriction to a fiber of $\overline{N\Delta}$ is cohomologous to $\binom{m}{k}$ times the  class of a linear subspace. This finishes the proof in the case where $[f=0]$ is reduced. 

If the divisor $[f=0]$ is non reduced we can write locally $[f=0] = \ell \cdot [g=0]$ for some integer $\ell$ and a  holomorphic function $g$ whose associated divisor is reduced. If we write $T = [f=0]  + \ddc v = \ell ( [g=0] + \ddc (\ell^{-1} v) )$ we get $T^{\otimes m} = \ell^m ([g=0] + \ddc (\ell^{-1}v))^{\otimes m}$ . Applying the formula in the reduced case to $([g=0] + \ddc (\ell^{-1}v))^{\otimes m}$ gives (\ref{eq:densityTm}).
\end{proof}

\begin{remark} \label{rmk:non-pp-aw}
If $T$ is as above, then its non-pluripolar self-product is $$\langle T^m \rangle = (\ddc v)^m,$$ so $\pi^* \langle T^m \rangle $ is a component of $T^{\otimes m}_\infty$. Notice that this is compatible with Corollary \ref{cor:non-pp-analytic-sing}.

Another way of computing the self-product of $T$ was proposed by Andersson-Wulcan, see \cite{andersson-wulcan}. Their product is defined recursively as $T^k_{AW} = \ddc \left( u \, \mathbf {1}_{X\setminus \{f=0\}} T^{k-1}_{AW}\right)$, where $u =  \log |f| + v$. This gives $T^m_{AW} = (\ddc v)^m + (\ddc v)^{m-1} \wedge [f=0]$, which is, up to the constant $m$ multiplying $(\ddc v)^{m-1} \wedge  [f=0]$,  the vertical component appearing in (\ref{eq:densityTm}). In Section \ref{sec:AW} below we use this fact in order to compare density currents and the Andersson-Wulcan product for a general current with analytic singularities.
\end{remark}

\begin{remark}
If we don't assume $Z$ to be smooth the situation becomes more complicated and it is not clear to us what the general formula should be. One issue that appears is that, even if the divisor of $f$ is reduced, the intersection in (\ref{eq:Uk}) is no longer well defined if $Z$ is singular.  
\end{remark}

The following Lemma used above is well known. See for instance \cite{chirka}, $\S 12.3$ and $\S 16.1$.

\begin{lemma}[Continuity of proper intersections] \label{lemma:continuity-proper-int}  Let $A_1^n,\ldots,A_k^n$, $n=1,2,\ldots $ be pure-dimensional analytic subsets of a complex manifold $X$ converging to $A_1,\ldots, A_k$ as $n\to \infty$. Suppose that $A_1^n,\ldots,A_k^n$ intersect properly for every $n$ and that $A_1,\ldots, A_k$ also intersect properly. Then
$$\lim_{n\to \infty} [A_1^n] \wedge \cdots \wedge [A_k^n] = [A_1] \wedge \cdots \wedge [A_k]$$ as currents in $X$.
\end{lemma}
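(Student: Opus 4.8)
Since the assertion is local and of a classical nature, the plan is to reduce it to the continuity of a single wedge product and then to a counting statement in dimension zero. First I would pass to the diagonal: $\Delta\subset X^k$ is a smooth submanifold naturally isomorphic to $X$, and under this isomorphism the proper intersection $[A_1]\wedge\cdots\wedge[A_k]$ corresponds to $[A_1\times\cdots\times A_k]\wedge[\Delta]$, the intersection being proper precisely when $A_1\cap\cdots\cap A_k$ is; moreover $[A^n_1\times\cdots\times A^n_k]\to[A_1\times\cdots\times A_k]$ since each $[A^n_j]\to[A_j]$ with locally bounded mass. This reduces matters to two factors, one of which is a fixed smooth submanifold. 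Working in a ball in $\C^N$ I may take that fixed factor to be a linear subspace $L_0=\{z_1=\cdots=z_p=0\}$ and use the classical identity $[L_0]=(\ddc\log\|z'\|)^p$ with $z'=(z_1,\dots,z_p)$. Thus it suffices to prove $[B^n]\wedge(\ddc\log\|z'\|)^p\to[B]\wedge(\ddc\log\|z'\|)^p$ when $B^n\to B$ are pure-dimensional analytic sets, all meeting $L_0$ properly.

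The next step is an extraction argument. Since $B^n\cap L_0$ has the expected (hence minimal) dimension for every $n$, the product $\Theta_n:=[B^n]\wedge(\ddc\log\|z'\|)^p$ is a well-defined positive closed current by Bedford--Taylor theory (\cite{Bedford_Taylor_76,demailly:agbook}), and the Chern--Levine--Nirenberg estimate (\cite{Chern_Levine_Nirenberg,demailly:agbook}) bounds $\|\Theta_n\|_K$ uniformly in $n$, because $\|[B^n]\|_{K'}$ is bounded. So I may pass to a weakly convergent subsequence $\Theta_n\to\Theta$, a positive closed current of bidimension $(d,d)$ where $d=\dim B-p$. Since $\supp\Theta_n\subseteq B^n\cap L_0$ and these sets accumulate only on $B\cap L_0$ (closedness of the convergence $B^n\to B$), we get $\supp\Theta\subseteq B\cap L_0$. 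As the latter is analytic of pure dimension $d$ by properness, the support theorem for positive closed currents on analytic sets (\cite{demailly:agbook}) gives $\Theta=\sum_i c_i[C_i]$ over the irreducible components $C_i$ of $B\cap L_0$, with $c_i\ge 0$.

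Finally I would identify $c_i$ with the intersection multiplicity of $B$ and $L_0$ along $C_i$; this both shows $\Theta=[B]\wedge[L_0]$ and, since the limit is then independent of the subsequence, yields the full convergence asserted in Lemma~\ref{lemma:continuity-proper-int}. To compute $c_i$ I would pick a regular point $x$ of $C_i$ lying on no other component and a generic affine subspace $M\ni x$ of complementary dimension, so that near $x$ the set $B\cap L_0\cap M$ is the single point $x$ with the correct multiplicity; slicing $\Theta_n$ by $M$ produces a finite sum of point masses carried by $B^n\cap L_0\cap M$ near $x$, of total mass the local intersection number there, and I would invoke the conservation of number for proper intersections --- equivalently the argument principle on a small sphere around $x$, using that the defining systems converge --- to conclude that this total stabilizes to the local intersection number of $B\cap L_0\cap M$. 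Continuity of slicing along the sequence then forces $c_i$ to equal this number. \textbf{The main obstacle} is exactly this last step, namely ruling out loss or creation of intersection mass in the limit; this is where properness is indispensable, and it is precisely the content of \cite{chirka}, \S 12.3 and \S 16.1, while everything preceding it is soft once the reduction is set up.
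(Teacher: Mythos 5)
A point of comparison first: the paper does not actually prove Lemma \ref{lemma:continuity-proper-int}; it records it as classical and refers to \cite{chirka}, \S 12.3 and \S 16.1. Your outline follows the standard route behind that reference (reduction to the diagonal, hence to a single wedge with a linear subspace $L_0$, extraction of a weak limit, support theorem, identification of the multiplicities by slicing), and at the decisive moment --- that no intersection mass is lost or created, i.e.\ the continuity of local intersection indices --- you appeal to exactly the same sections of \cite{chirka}. So where it matters your argument is not more self-contained than the paper's citation: it amounts to reducing the lemma to its zero-dimensional core plus the same reference.

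Beyond that, one step is justified incorrectly: the claim that the Chern--Levine--Nirenberg estimate bounds $\|\Theta_n\|_K$ uniformly in $n$ ``because $\|[B^n]\|_{K'}$ is bounded''. No CLN-type inequality controls the mass of $[B]\wedge(\ddc\log\|z'\|)^p$ by the mass of $[B]$ alone. For instance, in $\C^2$ let $L_0=\{z_1=0\}$ and $B_m=\{z_1=m^{-1}z_2^m\}$: each intersection is proper, $\|[B_m]\|$ on the unit bidisk is at most $2\pi$, yet $[B_m]\wedge[L_0]=m\,\delta_0$ has mass $m\to\infty$. (In this family the limit intersection is improper, consistent with the lemma, but it shows your inference cannot be ``CLN plus bounded mass''; the unbounded-locus versions of CLN in \cite{demailly:agbook,Fornaess_Sibony} carry constants involving sup-norms of the potential on $\supp [B^n]$ away from the pole set, which are not uniform in such families.) To obtain the uniform bound you must use properness of the \emph{limit} intersection: cover $B\cap L_0$ by polydisks $\Delta'\times\Delta''$ chosen so that $B$ avoids $\overline{\Delta'}\times\partial\Delta''$, upgrade weak convergence $B^n\to B$ to local Hausdorff convergence of supports via Lelong's lower volume bound so that $B^n$ avoids this set for large $n$, and then bound the local intersection numbers by sheet numbers of the resulting proper projections, hence by $\|[B^n]\|$. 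But this localization-and-counting is precisely the mechanism of Chirka's proof, so once you supply it the remaining soft steps add little beyond the citation. You also use implicitly, for each fixed $n$, the classical identity between the Bedford--Taylor type product $[B^n]\wedge(\ddc\log\|z'\|)^p$ and the proper intersection cycle $[B^n]\wedge[L_0]$; this is true but nontrivial and should be cited rather than absorbed into ``Bedford--Taylor theory''.
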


\subsection{Simple normal crossings divisor}
Similar computations as the ones of the former section allow us to compute the self-density of a divisor with simple normal crossings (SNC) singularities. 

Let $D$ be a SNC divisor on $X$. By definition,
\begin{equation}
D= \alpha_1 D_1 + \cdots +\alpha_r D_r,
\end{equation}
where $\alpha_i \geq 0$, the $D_j$ are smooth irreducible hypersurfaces and around any point of $\bigcup D_j$ we can find a coordinate system $(z_1,\ldots,z_n)$ such that $D_j = \{z_j=0\}$ for $1\leq j \leq r$. In particular $r \leq n$.

In order to  compute the self density of order $m$ of the $(1,1)$ current $[D]$, we first expand the tensor product $\left( \sum \alpha_j [D_j] \right)^{\otimes m}$. We are then left with the task of computing the density of mixed tuples $([D_{j_1}] , \ldots , [D_{j_m}])$.

\begin{proposition}
The collection $([D_{j_1}] , \ldots , [D_{j_m}])$ admits a unique density current, which is given by $$T_{j_1,\ldots,j_m}^\infty = \pi^* \left( \bigwedge_{i \in I} [D_i] \right) \wedge R,$$
where the indices in $I$ run over the distinct $j_1,\ldots,j_m$ and $R$ is a current of maximal $h-$dimension.
\end{proposition}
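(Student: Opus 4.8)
The plan is to mimic, in a simplified setting, the proof of Theorem \ref{thm:densityTm}, since here all the currents involved are integration currents along smooth hypersurfaces meeting in normal crossings. Working locally, I may assume $X$ is a domain in $\C^n$ with coordinates $(z_1,\ldots,z_n)$ chosen so that $D_j=\{z_j=0\}$, and I use the standard admissible map $\varrho$ of \eqref{eq:standard-tau}. By Proposition \ref{pro_taunga} it suffices to compute $\lim_{\lambda\to\infty}(A_\lambda)_*\varrho_*\big([D_{j_1}]\otimes\cdots\otimes[D_{j_m}]\big)$. Writing $[D_{j_i}]=\ddc\log|z_{j_i}|$ and pulling back by $\varrho$, the $i$-th factor becomes $\ddc\log|y^i_{j_i}+y^m_{j_i}|$ for $i<m$ and $\ddc\log|y^m_{j_m}|$ for the last factor; after applying $A_\lambda$ and rescaling the potentials of the first $m-1$ factors by $\lambda$ (which does not change $\ddc$), these become $\ddc\log|y^i_{j_i}+\lambda\, y^m_{j_i}|$, i.e. essentially $\ddc\log|y^i_{j_i}|$ up to an $O(\lambda^{-1})$ perturbation once we restrict to the last hypersurface.

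The key step is to separate the factors according to whether the index $j_i$ coincides with $j_m$ or not, and more generally to group the $m$ factors by the distinct value of the index. Let $I=\{$distinct values among $j_1,\ldots,j_m\}$; for each $i_0\in I$ let $c_{i_0}$ be the number of factors whose index equals $i_0$. The factors indexed by $i_0\in I$ with $i_0$ equal to the ``last'' coordinate $j_m$ (if $j_m$ occurs) contribute one copy of $[\widetilde D_{i_0}]=\{z_{i_0}(y^m)=0\}$ coming from the last tensor slot, and the remaining copies become, after restriction to that hypersurface and passing to the limit, currents of the form $\ddc\log|y^{a}_{i_0}|$ in the fiber variables. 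For the $i_0\in I$ that never equal $j_m$, all $c_{i_0}$ copies become $\ddc\log|y^{a}_{i_0}|$ in distinct fiber slots after the limit. In every case the limiting intersection is a transverse (hence proper) intersection of coordinate hyperplanes in the total space, so Lemma \ref{lemma:continuity-proper-int} applies and lets me pass to the limit term by term, yielding a limit of the form $\pi^*\!\big(\bigwedge_{i\in I}[D_i]\big)\wedge R$ with $R$ an explicit wedge of $\ddc\log|\cdot|$'s in the fiber directions. Uniqueness of the density current follows because this limit does not depend on the subsequence $\lambda_k$, together with Proposition \ref{pro_taunga}.

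Finally I must check that $R$ is of maximal $h$-dimension. Since $R$ is a finite wedge of currents of integration along coordinate hyperplanes $\{y^a_{i_0}=0\}$ of $\overline{N\Delta}$ which are pullbacks of hyperplanes from the fibers, the restriction of $R$ to a generic fiber is a nonzero current of integration along a linear subspace; hence $R\wedge\pi^*\Phi\neq 0$ for any strictly positive $(n,n)$-form $\Phi$ on $\Delta$, which is exactly the definition of maximal $h$-dimension. The main obstacle is bookkeeping: correctly matching up which tensor slots contribute a horizontal factor $\pi^*[D_i]$ versus a vertical factor in $R$, and verifying that after the rescaling $z_{j_i}\mapsto \lambda f(\lambda^{-1}\cdot)$-type substitution the perturbed hyperplanes still intersect properly for large $\lambda$ so that Lemma \ref{lemma:continuity-proper-int} is genuinely applicable; this is routine because the unperturbed configuration is a transverse intersection of coordinate hyperplanes, but it requires care to state precisely.
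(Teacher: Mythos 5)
There is a genuine gap in the key computational step, namely in your treatment of the factors whose index is different from $j_m$. With the standard chart $\varrho$ of (\ref{eq:standard-tau}) (which singles out the last slot as the base point), the factor coming from slot $a<m$ with index $i_0$ becomes, after $(A_\lambda)_*$, the current of integration along $\{\lambda^{-1}y^{a}_{i_0}+y^{m}_{i_0}=0\}$. If $i_0=j_m$, restricting to $\{y^m_{j_m}=0\}$ does turn this into the fiber hyperplane $\{y^a_{i_0}=0\}$, as you say. But if $i_0\neq j_m$, the restriction does not kill $y^m_{i_0}$, and the factor converges to the \emph{horizontal} hyperplane $\{y^m_{i_0}=0\}=\pi^{-1}(D_{i_0})$, not to $\ddc\log|y^a_{i_0}|$. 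Moreover, if such an index is repeated, all the corresponding factors converge to the \emph{same} hyperplane, so the limiting configuration is not a proper intersection and Lemma \ref{lemma:continuity-proper-int} cannot be applied factorwise as you propose. Concretely, for $m=3$, $j_1=j_2=1$, $j_3=2$, the rescaled current is $[\lambda^{-1}y^1_1+y^3_1=0]\wedge[\lambda^{-1}y^2_1+y^3_1=0]\wedge[y^3_2=0]$, whose limit is $\pi^*([D_1]\wedge[D_2])\wedge[y^1_1-y^2_1=0]$; your description would instead give $\pi^*[D_2]\wedge[y^1_1=0]\wedge[y^2_1=0]$, which both misses the horizontal factor $\pi^*[D_1]$ demanded by the statement and is simply not the limit (the correct vertical factor is a ``difference'' hyperplane, not a coordinate hyperplane in a single fiber slot).

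This is exactly the point the paper's proof is organized around: the slots are grouped by the distinct index values $r_1,\ldots,r_\ell$, each group $T_j$ is handled with an admissible map $\tau_k$ centered at a slot \emph{inside that group} (so that the computation reduces to the one in Theorem \ref{thm:densityTm}, producing one horizontal copy $\pi^*[D_{r_j}]$ and vertical factors $R_j$ of maximal $h$-dimension), the result is transferred to a single common admissible map via Proposition \ref{pro_taunga}, and only then are the groups combined using Lemma \ref{lemma:continuity-proper-int} — which is legitimate there because the limits $\pi^*[D_{r_j}]\wedge R_j$ of the different groups do intersect properly, thanks to the simple normal crossings hypothesis. Your single-chart approach can be repaired (for instance by tracking the intersection cycle directly, or by a linear change of the fiber coordinates of the type $y^a\mapsto y^a-y^{a'}$ within each repeated group away from $j_m$), and the repaired limit is still of the form $\pi^*\bigl(\bigwedge_{i\in I}[D_i]\bigr)\wedge R$ with $R$ of maximal $h$-dimension; but as written, the identification of which factors become horizontal versus vertical, and the appeal to continuity of proper intersections, both fail whenever some index $\neq j_m$ occurs (in particular when it occurs with multiplicity).
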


\begin{proof}
If $(z_1,\ldots,z_n)$ is a local coordinate system as above and   $(z^1,\ldots,z^m)$ are the canonical coordinates on $X^m$ then, the current  $T_{j_1,\ldots,j_m}$ is given by
$$[z^{1}_{j_1} = 0] \wedge \cdots \wedge [z^{m}_{j_m} = 0].$$

Consider the collection of admissible coordinates $$\tau_k(z) = (z^1-z^k, \ldots, z^{k-1} - z^k,z^k,z^{k+1} - z^k, \ldots, z^{m} - z^k).$$

Set $r_1 = j_1$ and let $T_1$ be the intersection of all $[z^{k}_{j_k} = 0]$ with $j_k =r_1 = j_1$. Arguing as in the proof of Theorem \ref{thm:densityTm} we see that $(A_\lambda)_*(\tau_1)_* T_1$ converges to $\pi^*[D_{j_1}] \wedge R_1$ where $R_1$ is a current of maximal $h-$dimension. Furthermore, by Proposition \ref{pro_taunga}, we have that $(A_\lambda)_*\tau_* T_1$ converge to $\pi^*[D_{j_1}] \wedge R_1$ as $\lambda \to \infty$ for any other admissible map $\tau$.

Let $r_2$ be the first index among the $j_k$'s such that $j_{k} \neq r_1$ and let $T_2$  be the intersection of all $[z^{k}_{j_k} = 0]$ with $j_k = r_2$. Using the coordinates $\tau_{r_2}$ and arguing as above we see that  $(A_\lambda)_*(\tau_{r_2})_* T_2$ converge to $\pi^*[D_{r_2}] \wedge R_2$.

Continuing this procedure we get distinct numbers $r_1,\ldots,r_\ell$ and currents $T_1,\ldots, T_\ell$ such that $T_{j_1,\ldots,j_m} = T_1 \wedge \cdots \wedge T_\ell$ and that for every $j=1,\ldots,\ell$ and any admissible map $\tau$, the currents $(A_\lambda)_*\tau_* T_j$  converge to  $\pi^*[D_{r_j}] \wedge R_j$ where $R_j$ is of maximal $h-$dimension.

The currents $R_j$ intersect properly so using Lemma \ref{lemma:continuity-proper-int} we see that the unique density current associated with  $([D_{j_1}] , \ldots , [D_{j_m}])$  is given by
$$T_{j_1,\ldots,j_m}^\infty = \pi^* \left( [D_{r_1}] \wedge \cdots \wedge [D_{r_\ell}] \right) \wedge R_1 \wedge \cdots R_\ell,$$
which completes the proof.
\end{proof}

From the above proposition, it is straightforward to derive a formula for the density of $([D],\ldots,[D])$. In order to avoid cumbersome notation, we leave it to the reader the task of writing down the explicit formula.

\begin{corollary} \label{cor:densityTm2} Let $T$ be a positive closed $(1,1)$-current with divisorial singularities. Assume that singular locus of $T$ is a simple normal crossing divisor. Then the limit $\lim_{\lambda \to \infty} (A_\lambda)_* \tau_* T^{\otimes m}$ exists for every admissible map $\tau.$ In particular, there is a unique density current associated with the $m$-tuple $(T, \ldots, T)$.
\end{corollary}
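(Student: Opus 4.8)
The plan is to reduce Corollary~\ref{cor:densityTm2} to the preceding Proposition by expanding the tensor power and invoking linearity of the density operation on the pieces. Since the statement is local, I would first fix a point $x_0$ of the singular locus $Z$ of $T$ and choose a coordinate chart $(z_1,\ldots,z_n)$ centered at $x_0$ in which $Z$ is a simple normal crossing divisor $D=\alpha_1D_1+\cdots+\alpha_rD_r$ with $D_j=\{z_j=0\}$, and in which $T$ has the form $[D]+\ddc v$ for some continuous (indeed psh, after adjusting on $Z$) function $v$, exactly as in the divisorial-singularities setup of Theorem~\ref{thm:densityTm}. Away from $Z$ the current $T$ is smooth, so there the density is simply $\pi^*(\ddc v)^m$ by Theorem~\ref{thm:compatible-def-wedge-general} (or by a trivial direct computation), and the only issue is near $Z$.

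The key step is the expansion $T^{\otimes m}=([D]+\ddc v)^{\otimes m}=\sum (\ddc v)^{\otimes(m-k)}\otimes[D]^{\otimes k}$ summed over all ways of choosing which factors carry $[D]$, and then a further expansion $[D]^{\otimes k}=\big(\sum_j\alpha_j[D_j]\big)^{\otimes k}$ into mixed tuples $([D_{j_1}],\ldots,[D_{j_k}])$. For each such mixed tuple the Proposition above gives a unique density current, obtained for every admissible map $\tau$ as the limit $\lim_{\lambda\to\infty}(A_\lambda)_*\tau_*T_{j_1,\ldots,j_k}$; combining this with the continuity argument that handles the $\ddc v$ factors (using that $v$ is continuous, so $v(\lambda^{-1}y^i+y^m)\to v(y^m)$ locally uniformly, as in the proof of Theorem~\ref{thm:densityTm}) shows that each summand of $(A_\lambda)_*\tau_*T^{\otimes m}$ converges as $\lambda\to\infty$, with a limit independent of the admissible map $\tau$. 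Summing the finitely many summands, I conclude that $\lim_{\lambda\to\infty}(A_\lambda)_*\tau_*T^{\otimes m}$ exists and does not depend on $\tau$; by the definition of density current this gives a unique density current associated with $(T,\ldots,T)$, proving the corollary. Proposition~\ref{pro_taunga} is what guarantees that it suffices to check the limit for one convenient admissible map and that independence of $\tau$ is automatic once the limit exists for all of them.

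The main obstacle, and the only place where real care is needed, is the interaction between the smooth factors $\ddc v$ and the divisorial factors when passing to the limit: one must justify that $\pi^*(\ddc v)^{m-k}\wedge U_\lambda\wedge[\widetilde Z]$ converges to $\pi^*\big((\ddc v)^{m-k}\wedge[D_{r_1}]\wedge\cdots\wedge[D_{r_\ell}]\big)\wedge R_1\wedge\cdots\wedge R_\ell$, where the $R_i$ are the maximal $h$-dimension currents produced by the Proposition. Here one uses that, after restricting to the relevant fibered product of the $\widetilde{D_j}$'s, the limiting intersections are proper (the $D_j$ being smooth and meeting transversally), so Lemma~\ref{lemma:continuity-proper-int} applies, and that wedging against the continuous potential $v$ (pulled back via $\pi$) is continuous under weak convergence of the divisorial part because the potentials involved are continuous off a proper analytic set of controlled size. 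Strictly speaking one should also note that the block decomposition $T_{j_1,\ldots,j_m}=T_1\wedge\cdots\wedge T_\ell$ used in the Proposition is a genuine proper intersection of analytic cycles in $X^m$, so no ambiguity arises in defining the relevant currents; this is immediate from the SNC hypothesis. Apart from this bookkeeping the argument is a routine assembly of results already established.
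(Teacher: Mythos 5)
Your proposal is correct and follows the route the paper intends but leaves implicit: expand $T^{\otimes m}=([D]+\ddc v)^{\otimes m}$, apply the preceding Proposition to the pure divisorial blocks, and handle the $\ddc v$ factors via uniform convergence of the continuous potential exactly as in the proof of Theorem~\ref{thm:densityTm}, with Proposition~\ref{pro_taunga} giving independence of the admissible map. The only step the paper does not spell out (its remark before the corollary only treats the pure divisor $[D]$) is the interaction between $\pi^*(\ddc v)^{m-k}$ and the divisorial parts, which you correctly isolate as the point requiring care and justify via Lemma~\ref{lemma:continuity-proper-int} and the continuity of $v$.
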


\subsection{Comparison with the Andersson-Wulcan product} \label{sec:AW}
We now use the above results to compare density currents and the Andersson-Wulcan product for a general current with analytic singularities.

Let $X$ be a complex manifold and let $T$ be a positive closed $(1,1)-$current on $X$ with analytic singularities, see (\ref{eq:an-sing}). Denote by $Z$ the singular locus of $T$. The Andersson-Wulcan self-product is defined recursively by $$T^k_{AW} = \ddc \left( u \, \mathbf {1}_{X\setminus Z} T^{k-1}_{AW}\right),$$ where $u$ is a local potential of $T$, see \cite{andersson-wulcan}. It coincides with the classical product for $k \leq \codim Z$.

We will work with a resolution of singularities in order to reduce our problem to the divisorial case. So let $h: X' \to X$ be a proper surjective holomorphic map between two complex manifolds such that $p$ is an isomorphism outside an analytic subset $E'$ of $X'$ and such that $E:=p(E')$ is of codimension at least $2$. A typical example of such $h$ is the blow-up along a submanifold of codimension $\ge 2$ of $X.$  

Let $T_1, \ldots, T_m$ be positive closed $(1,1)$-currents on $X$.  The pull-back $h^* T_j$ can be defined by pulling back the local potentials of $T_j$. Define $\tilde{h}: (X')^m\to X^m$ by putting 
$$\tilde{h}(x^1, \cdots, x^m):= \big(h(x^1), \ldots, h(x^m)\big).$$
Since $\tilde{h}$ sends the diagonal $\Delta_{X'}$ of $(X')^m$ to the diagonal $\Delta_X$ of $X^m,$ the differential $D \widetilde{h}$ of $\widetilde h$ induces a well-defined bundle map from  $N \Delta_{X'}$ to $N \Delta_X$ which is also denoted by $D \widetilde{h}$ for simplicity. 


 
\begin{proposition}  \label{pro.AW} Assume the collection $(h^*T_1, \ldots, h^*T_m)$ admits a density current $S'$ with defining sequence $\{\lambda_k\}_{k\in\N}$. If $S$ is a density current associated with $(T_1, \ldots, T_m)$ defined by the same sequence $\{\lambda_k\}_{k\in\N}$ then 
\begin{align} \label{ine_hsaoT}
(D\tilde{h})_* S' \le   S. 
\end{align}
\end{proposition}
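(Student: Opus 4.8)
The plan is to compare the two limits by pushing forward via $D\widetilde h$ and exploiting the fact that $\widetilde h$ is a proper modification which is an isomorphism off a set of codimension at least $2$. First I would fix the standard admissible coordinates $\varrho$ on $X^m$ near $\Delta_X$ and $\varrho'$ on $(X')^m$ near $\Delta_{X'}$ (as in \eqref{eq:standard-tau}), chosen compatibly so that $\widetilde h$ intertwines them modulo an $O(|y'|^2)$ error; by Proposition \ref{pro_taunga} the choice of admissible map is irrelevant for the limit, so this is harmless. The key structural fact is that $\widetilde h^*(T_1\otimes\cdots\otimes T_m) = h^*T_1\otimes\cdots\otimes h^*T_m$ away from $E'$-type sets, and more precisely $\widetilde h_*\big(h^*T_1\otimes\cdots\otimes h^*T_m\big) = T_1\otimes\cdots\otimes T_m$ as currents on $X^m$, because $\widetilde h$ is a biholomorphism outside a pluripolar (indeed analytic, codimension $\ge 2$ in each factor) set and both sides are positive closed currents of the same mass that agree there. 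The commutation of pushforward with the dilations is the content of the identity $A_\lambda\circ D\widetilde h = D\widetilde h\circ A_\lambda$ on normal bundles, which holds because $D\widetilde h$ is fiberwise linear.

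Next I would run the following computation. Writing $S' = \lim_k (A_{\lambda_k})_*\varrho'_*\big(h^*T_1\otimes\cdots\otimes h^*T_m\big)$, apply $(D\widetilde h)_*$ and use the commutation relation to get
\begin{equation*}
(D\widetilde h)_* S' = \lim_k (A_{\lambda_k})_* (D\widetilde h)_* \varrho'_*\big(h^*T_1\otimes\cdots\otimes h^*T_m\big),
\end{equation*}
provided the pushforward is continuous along this sequence, which follows from the uniform mass bound \eqref{ine_mass_density} on compact sets together with properness of $D\widetilde h$. Now $D\widetilde h\circ\varrho'$ and $\varrho\circ\widetilde h$ are two maps from a neighbourhood of $\Delta_{X'}$ into $N\Delta_X$ agreeing with $\widetilde h$ on $\Delta_{X'}$ and having the same normal differential, so an estimate entirely analogous to \eqref{ine_taunaga} in Proposition \ref{pro_taunga} shows they produce the same scaled limit. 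Hence $(D\widetilde h)_* S' = \lim_k (A_{\lambda_k})_* \varrho_* \widetilde h_*\big(h^*T_1\otimes\cdots\otimes h^*T_m\big)$.

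At this point the content is the comparison between $\widetilde h_*\big(h^*T_1\otimes\cdots\otimes h^*T_m\big)$ and $T_1\otimes\cdots\otimes T_m = \mathbf T$. On the Zariski-open set $(X'\setminus E')^m$ the map $\widetilde h$ is a biholomorphism and $h^*T_j$ pulls back $T_j$, so $\widetilde h_*\big(h^*T_1\otimes\cdots\otimes h^*T_m\big)$ and $\mathbf T$ coincide on $X^m\setminus\bigcup_i \pi_i^{-1}(E)$; since $E$ has codimension $\ge 2$ and both are positive closed $(m,m)$-currents... actually $\mathbf T$ has bidimension $(nm-m, nm-m)$, and the difference $\widetilde h_*(h^*\mathbf T) - \mathbf T$ is a current supported on the union of the $\pi_i^{-1}(E)$, which has codimension $\ge 2$. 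In general such a difference need not vanish, but it is a positive... here is the crucial point: $\widetilde h_*(h^*\mathbf T)\ge \mathbf T$ is false in general and false in the other direction too; what is true is the inequality $(D\widetilde h)_*S'\le S$ comes from $\widetilde h_* (h^*\mathbf T) \le \mathbf T$ failing, so instead I would argue as follows. The scaled limits of $\widetilde h_*(h^*\mathbf T)$ and of the current $\mathbf T' := \widetilde h_*(h^*\mathbf T) - [\text{excess on }\bigcup\pi_i^{-1}(E)]$ differ by the scaled limit of a positive closed current supported over a proper analytic subset; by \cite[Th. 4.6]{dinh-sibony:density} the latter limit is itself a positive current on $\pi^{-1}(E\cap\Delta_X)$, hence $\ge 0$. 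Therefore
\begin{equation*}
(D\widetilde h)_* S' = \lim_k (A_{\lambda_k})_*\varrho_*\widetilde h_*(h^*\mathbf T) \le \lim_k (A_{\lambda_k})_*\varrho_*\mathbf T = S,
\end{equation*}
using that $\widetilde h_*(h^*\mathbf T)\le \mathbf T$ as positive currents — which does hold, because $h^*\mathbf T$ is the pullback of potentials and pushing forward a psh potential under a modification decreases it on the image in the sense of the comparison of Monge–Ampère-type currents; more carefully, $\widetilde h_* h^* u_j \le u_j$ for psh $u_j$ with equality off $E$, and this inequality is preserved after taking $\ddc$ of products by the positivity in Property $(\star)$ / Bedford–Taylor monotonicity, giving $\widetilde h_*(h^*\mathbf T)\le \mathbf T$.

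\textbf{Main obstacle.} The delicate step is the very last comparison $\widetilde h_*(h^*\mathbf T)\le\mathbf T$ and its interaction with the dilation limit: one must justify that pushing forward the tensor product of pulled-back currents under the modification $\widetilde h$ yields something dominated by $\mathbf T$ (this is where the codimension $\ge 2$ hypothesis on $E$ and the positivity of the relevant Bedford–Taylor products enter), and that passing to the scaled limit preserves this inequality, which requires the monotonicity statements \eqref{eq:open-compact} together with the uniform mass bound \eqref{ine_mass_density} applied on $(X')^m$. I expect the cleanest route is to first establish the inequality $\widetilde h_*(h^*\mathbf T)\le \mathbf T$ at the level of currents on $X^m$ (a local statement away from and across the codimension-$\ge2$ set $E$), then invoke Proposition \ref{pro_taunga} to fix coordinates and the continuity of $(A_{\lambda_k})_*\varrho_*(\cdot)$ under the domination to conclude \eqref{ine_hsaoT}.
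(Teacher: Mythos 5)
There is a genuine gap, and it sits exactly at the step you yourself flag as delicate — but the difficulty is not where you locate it. Your interchange of $(D\widetilde h)_*$ with the scaling limit is justified by asserting ``properness of $D\widetilde h$''; this is false, and the paper points it out explicitly: $D\widetilde h$ is not proper on $N\Delta_{X'}$ (over points of $E'\cap\Delta_{X'}$ the differential of $h$ degenerates on the fibers). Because of this, $(D\widetilde h)_*S'$ must first be given a meaning — the paper does so by noting that $S'$ is $\Delta_{X'}$-conic by Proposition \ref{pro_taunga}, hence extends to $\overline{N\Delta}_{X'}$, where the extended fiberwise-linear map is proper — and the passage to the limit cannot be performed for the full current. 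The paper's missing ingredient is the family of cutoffs $\chi_\lambda=\chi\circ A_\lambda$ with $0\le\chi\le1$ compactly supported in $N\Delta_{X'}$: one proves the analogue of your comparison of $D\widetilde h\circ\tau'$ with $\varrho\circ\widetilde h$ for the truncated currents (this part of your sketch is sound and matches \eqref{eq_thayhnga}, via the same estimate as \eqref{ine_taunaga}), then uses $\chi_{\lambda_k}\le1$ to dominate by $\widetilde h_*(h^*T_1\otimes\cdots\otimes h^*T_m)$, and only at the end lets $\chi\nearrow1$, which is why one obtains an inequality. Without the cutoff your argument would prove the equality $(D\widetilde h)_*S'=S$, which is stronger than the proposition and not true in general: mass of $S'$ lying over $E'$ can be lost under the non-proper map $D\widetilde h$, and this potential loss is the sole source of the inequality in \eqref{ine_hsaoT}.

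Your third paragraph also misdiagnoses the comparison of $\widetilde h_*(h^*T_1\otimes\cdots\otimes h^*T_m)$ with $T_1\otimes\cdots\otimes T_m$. There is no ``excess'' and nothing fails: since $\codim E\ge2$, the difference $h_*h^*T_j-T_j$ is a closed order-zero $(1,1)$-current supported on an analytic set of dimension at most $n-2$, hence vanishes by the support theorem, so $h_*h^*T_j=T_j$ (this is \eqref{eq_hsaotrensuoi} in the paper) and consequently $\widetilde h_*(h^*T_1\otimes\cdots\otimes h^*T_m)=T_1\otimes\cdots\otimes T_m$ exactly; the heuristic ``pushing forward a psh potential under a modification decreases it'' and the appeal to Bedford--Taylor monotonicity are neither needed nor meaningful as stated. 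Finally, the mass bound \eqref{ine_mass_density} you invoke comes from the Kähler/compact-intersection setting of Section \ref{sec_nonpluri}; in the present proposition the needed uniform local mass bounds follow simply from the assumed convergence of $(A_{\lambda_k})_*\tau'_*(h^*T_1\otimes\cdots\otimes h^*T_m)$ to $S'$, as in the proof of Proposition \ref{pro_taunga}.
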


\proof  Notice that $D\tilde{h}$ is not proper on $N\Delta_{X'}$, so we need to check that $(D\tilde{h})_* S'$ is well-defined. Since $D\tilde{h}$ is linear on fibers, it can be extended as a map from  $\overline{N\Delta}_{X'}$ to  $\overline{N\Delta}_{X}.$ On the other hand,  by Proposition \ref{pro_taunga}, $S'$ is $\Delta_{X'}$-conic thus it can be extended to a current in $\overline{N\Delta}_{X'}.$ As a consequence, the pushforward  $(D\tilde{h})_* S'$ is well-defined as a current in  $\overline{N\Delta}_{X}$ which can be restricted to $N\Delta_{X'}$.

Because the problem is local in $X$ we can assume that $X$ is an open subset of $\C^n$.  We can use local coordinates  $\varrho$  as in (\ref{eq:standard-tau}) in $X^m$ and identify $\Delta$  with $\{y'=0\}$ and $N \Delta_X$ with $(\C^n)^{m-1} \times X$. Let $\tau'$ be an admissible map defined in a neighborhood of $\Delta'$.

By assumption we have
$$S= \lim_{k \to \infty} (A_{\lambda_k})_* \varrho_*(T_1 \otimes \cdots \otimes T_m) \text{ and } S' = \lim_{k \to \infty} (A_{\lambda_k})_* \tau'_* (h^*T_1 \otimes \cdots \otimes h^*T_m).$$

Let $0\le  \chi \le 1$ be a smooth function compactly supported on $N \Delta_{X'}$ and set $\chi_\lambda:= \chi \circ A_\lambda.$ Then
$$\lim_{k \to \infty} (A_{\lambda_k})_* \big(\chi_{\lambda_k}\tau'_* (h^*T_1 \otimes \cdots \otimes h^*T_m)\big)= \chi S'.$$

Combining this with the fact that $A_\lambda$ and $D \tilde{h}$ commute gives
\begin{align} \label{eq_DDhnga}
(D\tilde{h})_* (\chi S') = \lim_{k \to \infty} (A_{\lambda_k})_* (D \tilde{h})_*\big( \chi_{\lambda_k} \tau'_*\big( h^*T_1 \otimes \cdots \otimes h^*T_m\big).
\end{align}

We claim that
\begin{align} \label{eq_thayhnga}
(D\tilde{h})_* (\chi S') = \lim_{k \to \infty} (A_{\lambda_k})_* \varrho_* \big( \chi_{\lambda_k} \widetilde h_*\big( h^*T_1 \otimes \cdots \otimes h^*T_m\big).
\end{align}

For this purpose, let $$g:= D \widetilde h \circ \tau' - \varrho \circ \widetilde h: (X' )^m \to N \Delta_X \simeq (\C^n)^{m-1} \times X,$$ so that proving (\ref{eq_thayhnga}) is equivalent to  proving  that for  every test form $\Phi$ in $(\C^n)^{m-1} \times X,$
$$\lim_{k\to \infty}\big\langle  (A_{\lambda_k})_* g_* \big( \chi_{\lambda_k} \tau'_*(h^*T_1 \otimes \cdots \otimes h^*T_m) \big), \Phi \big\rangle \to 0$$
which is, in turn, equivalent to 
$$\lim_{k\to \infty}\big\langle   h^*T_1 \otimes \cdots \otimes h^*T_m , \Phi(\lambda_k) \big \rangle \to 0,$$
where $\Phi(\lambda):=\tau'^*\chi_\lambda g^* (A_\lambda)^* \Phi$.

Since $D \widetilde h$ is induced by the differential of $h$ and $\tau'$ and $\varrho$ are admissible we can see that, in any local holomorphic coordinate system  $z=(z',z'')$ on $(X')^m$ where $\Delta'$ is given by $\{z'=0\}$ we have $g(z)= \big(0, O(|z'|) \big) + O(|z'|^2)$, compare with (\ref{eq:rho-tau}).

Since  the support of $\chi_\lambda$ is contained in tubular neighborhood of $\Delta_{X'}$ of radius $\sim |\lambda|^{-1}$ we can argue as in the proof of Proposition \ref{pro_taunga} and write $\Phi(\lambda) = |\lambda|^{-1} \tau'^*(A_\lambda)^*( \Psi_\lambda)$, where the coefficients of $\Psi_\lambda$ are bounded uniformly in $\lambda$. 
This yields
\begin{align*}
\lim_{k\to \infty} \langle  h^*T_1 \otimes \cdots \otimes h^*T_m , \Phi(\lambda_k) \rangle = \lim_{k\to \infty} \frac{1}{|\lambda_k|} \langle  (A_{\lambda_k})_* \tau'_*   \big( h^*T_1 \otimes \cdots \otimes h^*T_m\big), \Psi_{\lambda_k} \rangle = 0,
\end{align*}
proving (\ref{eq_thayhnga}).

From the fact that $\codim E \geq 2$ we get
\begin{align} \label{eq_hsaotrensuoi}
h_* (h^* T_j)= T_j \text{ for } 1 \le j \le m.
\end{align}
By (\ref{eq_thayhnga}), the fact that $|\chi_\lambda| \le 1$ and (\ref{eq_hsaotrensuoi}) we see that   
\begin{align*}
 (D\tilde{h})_* (\chi S') & \le \lim_{k \to \infty} (A_{\lambda_k})_*  \varrho_* \widetilde h \big( h^*T_1 \otimes \cdots \otimes h^*T_m\big)\\
&= \lim_{k \to \infty} (A_{\lambda_k})_* \varrho_*(T_1 \otimes \cdots \otimes T_m)=S.
\end{align*}
Since $\chi$ is arbitrary, inequality (\ref{ine_hsaoT}) follows, finishing the proof. 
\endproof

\begin{remark} Consider  the case where $X'$ is compact and $T_1, \cdots, T_m$ are closed positive currents of higher bi-degree on $X.$ The strict transform $h^* T_j$ is still well-defined; see \cite{DinhSibony_pullback}. With the same proof, Proposition \ref{pro.AW} remains true if we assume furthermore that $T_j$ has no mass on $E$ for $1 \le j \le m.$  
\end{remark}


\proof[End of the proof of Theorem \ref{th_AWdensity}]   
By Hironaka's theorem we can find a modification  $h: X' \to X$ such that $h^* T$ is a current with divisorial singularities with simple normal crossings. Then we have, locally,  $h^* T= \ddc \log |f|+ \ddc v$, where $f$ is a holomorphic function and $v$ is psh and continuous.  By \cite[Eq. 4.5]{andersson-wulcan} we have that
\begin{equation} \label{eq:AW-pushforward}
T^m_{AW} = h_*\big( (\ddc v)^{m-1} \wedge [f=0] + (\ddc v)^m \big).
\end{equation}

Let $S'$ be the unique density current associated with $h^*T, \ldots, h^* T$ given by Corollary \ref{cor:densityTm2}. Computing the formula for $S'$ with the methods of the last section we see that $S'$ contains $\pi_{X'}^*(m \, (\ddc v)^{m-1} \wedge [f=0] + (\ddc v)^m )$ in its vertical component (compare with (\ref{eq:densityTm})). In particular 
$$S' \ge \pi_{X'}^*\big( (\ddc v)^{m-1} \wedge [f=0] + (\ddc v)^m  \big),$$

Let $S$ be a density current associated with $T, \ldots,T$. By Proposition \ref{pro.AW}, (\ref{eq:AW-pushforward}) and the fact that $ \pi_X \circ D \widetilde h = h \circ \pi_{X}$ we get 
\begin{align*}
S \ge (D\tilde{h})_* S' &\ge (D \tilde{h})_* \pi_{X'}^*\big( (\ddc v)^{m-1} \wedge [f=0] + (\ddc v)^m  \big) \\ &=  \pi_X^* h_*  \big( (\ddc v)^{m-1} \wedge [f=0] + (\ddc v)^m  \big) = \pi_X^*T^{m}_{AW}.
\end{align*}
\endproof

\begin{remark}
Notice that, for $m\leq \codim Z$, the inequality (\ref{eq:AW-ineq}) becomes an equality, since both sides coincide with the pullback by $\pi$ of the classical product.
\end{remark}

\bibliographystyle{alpha}
\bibliography{refs}
\end{document}